\title{Linear foliations of complex spheres I. Chains}
\author{Laurent Dufloux}
\newcommand{\GB}{\mathbf{G}}
\newcommand{\DB}{\mathbf{D}}
\newcommand{\R}{\mathbf{R}}
\newcommand{\C}{\mathbf{C}}
\newcommand{\K}{\mathbf{K}}
\newcommand{\PB}{\mathbf{P}}
\newcommand{\inprod}[2]{\mathbf{(} #1\ |\ #2 \mathbf{)}}
\newcommand{\Span}{\mathrm{Span}}
\newcommand{\Dist}[2]{\frac{\|#1 \vee #2\|}{\|#1\| \cdot \| #2 \|}}
\newcommand{\Det}{\mathrm{det}}
\newcommand{\Proj}{\mathrm{Proj}}
\newcommand{\Leb}{\mathrm{Leb}}
\newcommand{\BB}{\mathbf{B}}
\newcommand{\SB}{\mathbf{S}}
\newtheorem{remark}{Remark}
\newtheorem{lemma}{Lemma}
\newtheorem{theorem}{Theorem}
\newtheorem{proposition}{Proposition}
\newtheorem{corollary}{Corollary}
\newtheorem{definition}{Definition}
\begin{document}
\maketitle
\abstract{We provide coordinate-free versions of the classical projection Theorem of Marstrand--Kaufman--Mattila. This allows us to generalize
this Theorem to the complex setting; in restriction to complex spheres, we obtain further projection Theorems along so-called \emph{complex chains}.}

\section{Introduction}

\subsection{Motivation}
\subsubsection{Background}
Since Marstrand's seminal paper \cite{Marstrand1954}, several authors have sought to improve and generalize the so-called
Marstrand projection Theorem.

Let us recall the basic Euclidean setup in any dimension, as in \cite{Mattila1995} (Marstrand's paper dealt only with the plane).
Let $n \geq 2$ and $1 \leq k \leq n-1$. Fix a Borel subset $A \subset \R^n$ of Hausdorff dimension $s$. 
Pick a vector subspace of dimension $k$ at random (with respect to the Lebesgue measure on the space of 
$k$--dimensional vector subspaces of $\R^n$). Project $A$ along the vector subspace, \emph{e.g.} 
pushing it down the quotient mapping. The projected set (sitting in a space isometric to $\R^{n-k}$) has, 
almost surely, Hausdorff dimension $\inf\{s, n-k\}$.

A Marstrand-type result is a Theorem of this kind: an almost sure equality for the dimension of a given Borel set projected 
through a random projection.

It could also be said that a Marstrand-type result deals with the almost sure dimension of a Borel set, transverse to
a random foliation (according to a fixed measure on some space of foliations). This is our point of view in this paper. For 
a definition, see \ref{ss.transverse-dimension}.

There are (at least) two natural ways to generalize this result: 
\begin{itemize}
	\item To look at a restricted family of foliations, \emph{e.g.} to consider, in $\R^3$, vector lines spanned not 
	by any vector but by a one-parameter family of vectors, as in \cite{Orponen2017}.
	\item To look at foliations defined in the same fashion in non-Euclidean spaces, \emph{e.g.} in Heisenberg group. 
	See, for example, \cite{Balogh2012}.
\end{itemize}

\begin{table}
	\caption{Notations}
	\vspace{1em}
	\begin{tabular}{|c|c|c|}\hline 
		Symbol & Page & Definition \\  \hline 
		$\GB(E)$ & \pageref{def.GB} & Grassmann algebra of $E$ \\
		$\GB^k (E)$ & \pageref{def.GBk} & $k$-vectors of $E$\\
		$\vee$ & \pageref{def.vee} & Progressive (exterior) product \\
		$\DB^k(E)$ & \pageref{def.DBk} & Decomposable $k$-vectors of $E$ \\
		$\Span(u_1 \vee \cdots \vee u_k)$ & \pageref{def.span} & $\K u_1 \oplus \cdots \oplus \K u_k$\\
		$\wedge$ & \pageref{def.wedge} & Regressive product \\
		$\GB^k (\Phi)$ & \pageref{def.GBkPhi} & Canonical extension of $\Phi$ to $\GB^k(E)$ \\
		$\PB_\K^n$ & \pageref{def.projective-space} & Projective space of dimension $n$ over $\K$ \\
		$\PB(E)$ & \pageref{def.PB} & Projective space associated to $E$\\
		$\PB \DB^k(\K^{n+1})$ & \pageref{def.PBDBK} & Image of $\DB^k (\K^{n+1})$ in the projective space $\PB \GB^k (\K^{n+1})$ \\
		$d$ & \pageref{eq.angular-metric} & Angular metric on a projective space \\
		$\perp$ & \pageref{def.perp} & Orthogonality w.r.t. inner product \\
		$\Proj_U$ & \pageref{def.projU} & Generalized radial projection at $U$\\
		$\tau$ & \pageref{def.tau} & See formula \eqref{def.tau} \\
		$\phi_U$ & \pageref{def.phiU} & Lipschitz modulus; see formula \eqref{def.phiU} \\
		$\Leb$ & \pageref{def.Leb} & Lebesgue measure \\
		$I_\sigma (\mu)$ & \pageref{def.energy} & $\sigma$-energy of $\mu$ \\
		$\SB,\BB$ & \pageref{def.S-B} & Sphere and open ball in $\PB_\K^n$ \\
		$\mathcal L_\K^k$ & \pageref{def.Lk} & Space of small $(k-1)$--spheres if $\K=\R$, resp. $k$--chains if $\K=\C$ \\ \hline 
	\end{tabular}
\end{table}

\subsubsection{Description the of results}
In this paper, we look at a quite obvious generalization: namely, 
we generalize Marstrand projection Theorem to the \emph{complex} setting. The word \emph{complex sphere}
in the title of this paper refers to the Euclidean spheres of odd dimension; these spheres sit naturally in
complex projective spaces, and the complex Marstrand projection Theorem we will obtain can be restricted to complex spheres 
to yield interesting projection Theorems with respect to some special families of so-called ``small spheres''.

The most notable feature of our approach is that everything happens in the projective space; this allows us to do 
coordinate-free computations, using extensively the Grassmann algebra. This adds some conceptual and notational difficulty.

The first half of our results (dealing with linear foliations of projective spaces) could be obtained using coordinates 
and standard computations as in \cite{Mattila1995} (where the real setting is handled). 

The other half of our results would be very awkward to formulate without the language of Grassmann algebra, especially in the complex 
setting, which is of interest to us. 
Informally, a reason for this is the fact that there is no good analogue, for Heisenberg groups, 
of projective spaces associated to vector spaces. The projective space $\PB_\R^n$ can be defined at the space of 
``infinite circles of $\R^{n+1}$ passing through the origin''. This definition would also make sense in Heisenberg group,
replacing ``infinite circles'' with ``infinite chains'' (see \cite{Goldman1999} for the definition 
of finite and infinite chains in Heisenberg group, or \ref{ss.general-setup} for the definition 
of chains we will be using); the point is that there is only one infinite chain passing 
through the origin. This is why, in this paper, we have to consider ``finite chains''; this also explains why it is 
much more efficient to work without coordinates.

Throughout this paper, we will deal with the real and complex settings at the same time. In the real setting, 
none of the results we obtain is new: they are all essentially equivalent to the basic Theorem 
of Marstrand--Kaufman--Mattila. We state them nonetheless because they serve to provide some geometric 
intuition to the reader, and to convince them that we are indeed generalizing the 
classical, real Euclidean, Marstrand Theorem -- this may not be obvious at first. 

\subsubsection{Plan of the paper}
In \ref{ss.transverse-dimension} we give a precise meaning to the notion of transverse dimension with respect to a foliation. 
In \ref{sss.grassmann-algebra} to \ref{sss.grassmann-extensions} we introduce the needed algebraic device: the Hermitian Grassmann (bi)algebra  
associated to a Hermitian space. In \ref{ss.basic-properties} we state and prove useful properties of the 
inner product in the Hermitian Grassmann algebra. The distance formula in \ref{ss.first-distance-formula} 
is a first hint of the geometric significance of the Hermitian structure on the Grassmann algebra.

Section \ref{s.linear-foliations} is the part of the paper that deals with linear foliations of projective
spaces and this is where we apply the algebraic tools described previously. In \ref{ss.radial-foliations}
we define the generalized radial projections we will use to parametrize our linear foliations. We then endow, in \ref{ss.codomain-metric}, the codomain 
of these radial projections with a canonical metric. The analysis of transversality of generalized radial projections is made easy by the 
product formula contained in \ref{ss.product-formula}. The result of \ref{ss.generalized-distance} is not needed in this paper; it is stated 
because it answers a question that arises naturally in this context. In \ref{ss.first-transversality} we prove transversality of the basic 
linear foliations of projective spaces, and this is applied to obtain a coordinate-free version of Marstrand's projection Theorem. 
We improve on this in \ref{ss.transversality-pointed} by looking at a lower-dimensional family of foliations; in the real setting the 
result we then obtain is equivalent to the classical Theorem of Marstrand--Kaufman--Mattila.

In Section \ref{s.spheres-foliations} we look at the results of the previous Sections in restriction to spheres.
The special case of $1$--chains has to
be dealt with separately in \ref{ss.foliations-one-chains}. In \ref{ss.concrete-look} we describe our foliations in coordinates
in order to help the reader get an idea of the geometry behind the algebra; a description of the geometry of 
chains is out of the scope of this paper and we refer to \cite{Goldman1999} for details, and suggestive computer-generated pictures. 

\subsection{Transverse dimension}\label{ss.transverse-dimension}
We give a precise meaning to the notion of \emph{dimension of a Borel set transverse to 
a given foliation}.

Let $X$ be a locally compact metric space. A \emph{foliation} on $X$ is a partition
$\xi$, all atoms of which are closed subsets of $X$. We denote by $\xi(x)$ the atom
of $\xi$ a given point $x \in X$ belongs to. The quotient space $X/\xi$ has elements
the atoms of $\xi$ and is endowed with the final topology for the projection
mapping $X \to X/\xi$. In general the metric of $X$ does not pass to the quotient,
because two distinct atoms of $\xi$ may be at zero distance from one another.
On the other hand, for any compact subset $K$ of $X$, the trace $\xi|K$ of $\xi$
on $K$ has compact atoms; the metric of $X$, restricted to $K$, passes
to the quotient, and the projection mapping $K \to K/\xi$ is, by definition,
Lipschitz.

\begin{definition} \label{def.transverse-dimension}
	In this situation, the \emph{transverse dimension of $K$ with respect
	to $\xi$} is the Hausdorff dimension of the quotient metric space $K/\xi$.

	If $A$ is a Borel subset of $X$, the \emph{transverse dimension of $A$
	with respect to $\xi$} is the supremum of the transverse dimensions
	of all compact subsets $K \subset A$, with respect to $\xi$.
\end{definition}
The transverse dimension of $A$ with respect to $\xi$ is at most equal to the Hausdorff dimension of $A$, $\dim A$;
indeed, $\dim A = \sup_K \dim K$ where $K$ goes through the family of compact subsets of $A$; and for any compact $K$,
the quotient mapping $K \to K/\xi$ is Lipschitz and cannot increase Hausdorff dimension.

This definition of transverse dimension highlights the fact that in general,
we do not have to be too concerned with the choice of the Lipschitz
mapping we use to parametrize the foliation. In the most classical situa-
tion, $X$ is the Euclidean plane and $\xi$ is the foliation of $X$ by affine lines
of some given angle $\theta$. The orthogonal projection onto the vector line
of angle $\theta + \pi/2$ is a suitable Lipschitz mapping, and so is the quotient
mapping with respect to the vector line of angle $\theta$.

In more complex situations, the adequate projection may have a less
elementary description, and it may also not be Lipschitz on the whole
space $X$, but only on every compact subspace of $X$. Our emphasis in this
paper will be on the geometry of foliations. We will introduce suitable
Lipschitz projections to work with but in our perspective the foliations
come first.

Let us provide a simple example to explain why it is useful to think in terms
of foliations rather than projections. Let $X$ be the Euclidean plane minus
the origin, and let $\xi$ be the foliation of $X$ by vector lines with the origin removed.
Now let $X'$ be the Euclidean plane minus two points $x$ and $y$, and
let $\xi'$ be the foliation of $X'$ by circles passing through $x$ and $y$ (with $x$
and $y$ removed from the circle). From the point of view of transverse dimension,
there is no difference between $(X, \xi)$ and $(X', \xi')$.
We may identify $X'$ with $X$ (we have to remove one more point from $X$)
via a M\"{o}bius transformation $f$; now $f$ maps $\xi'$ onto $\xi$ and it is locally
biLipschitz, so any local dimension property is preserved by $f$.

The fact that $\xi$ can be parametrized by the radial mapping at $0$ is
irrelevant and there is no need to find a corresponding projection mapping
for $\xi$.

Of course the transverse dimension of a given subset $A$ with
respect to $\xi$ depends of how $A$ is sitting with respect to $\xi$;
in general this is a very difficult problem. A \emph{Marstrand-type} Theorem deals not with a
fixed foliation but rather with almost every foliation in a given foliations space
endowed with some version of the Lebesgue measure.

We will usually make an abuse of language and speak of a foliation $\mathcal F$ of a space $X$
when in fact something has to be removed from both $X$ and $\mathcal F$ in order to get a genuine partition. 
For example we may say ``look at the foliation of the plane by lines passing through the origin''. What we mean
in this case is ``look at the foliation of the plane minus the origin by lines passing through the origin with
the origin removed''. Likewise, if some line $L$ is fixed in $\R^3$, the family of all affine planes containing $L$, 
with $L$ removed, is a foliation of $\R^3 \setminus L$, but we will actually write ``Let $\mathcal F$ be the
foliation of $\R^3$ by affine planes containing $L$''. It would be very unpleasant and quite pedantic to write down in 
every case which subset should be removed from the space and the leaves, and we leave it to the reader to make the obvious
corrections.

\subsubsection{Transversality and Kaufman's argument}
Transversality of a family of foliations (or a family of projections) is the crucial property to look for 
when one sets out to prove a Marstrand-type result. In the presence of transversality, a quite general argument, 
due to Kaufman \cite{Kaufman1975}, allows to prove a version of Marstrand's projection Theorem, as well as some 
improvements which we do not discuss in this paper in order to keep things short. 

In this paper, we are not going to improve on Kaufman's argument. Our purpose is to introduce ``good foliations''
and transversality will follow quite naturally from the definition (and the product formula, see \ref{ss.product-formula}).

A general exposition of Kaufman's argument in an abstract setting (dealing with parametrized families of projections)
can be found in \cite{Peres2000}. We write down a detailed proof of Corollary \ref{cor.first-transversality} because 
there are some issues, requiring us to work ``locally'' (cutting the measure 
into small pieces), that do not appear in Kaufman's usual argument. 

Our statements deal with dimension of sets; it would be equally sensible to concern ourselves with dimension of measures
and in the proofs this is what we actually do, implicitly using Frostman's Lemma, as in \cite{Mattila1995}.

\subsection{Hermitian forms on the Grassmann algebra}\label{ss.hermitian-forms}

\subsubsection{The Grassmann algebra}\label{sss.grassmann-algebra}

We refer to \cite{Barnabei1985} for a good elementary exposition of the Grassmann exterior (bi)algebra
associated to a vector space.
Another good reference is \cite{Bourbaki1998}, but beware of the conflicting notations.
In this paper we will use the same notations as in \cite{Barnabei1985}.

We now recall some basic definitions and fix appropriate notations.

Let $\K$ be $\R$ or $\C$ and fix a finite-dimensional $\K$-vector space $E$. The
elements of $E$ will usually be denoted by the letters $u, v, w$.

We denote by $\GB_\K(E)$\label{def.GB} the Grassmann algebra (over $\K$) associated with
$E$. The Grassmann algebra is also called the exterior algebra of $E$. The
\emph{progressive product} (also called \emph{exterior product}) will be denoted by the
symbol $\vee$\label{def.vee}
The \emph{regressive product} (to be introduced later) will be denoted
by $\wedge$.

\begin{remark}
	If $\K$ is $\C$, we may consider the Grassmann algebra over $\R$,
	$\GB_\R(E)$, as well as the Grassmann algebra over $\C$, $\GB_\C(E)$.
	In this paper we will always work with the Grassmann algebra over $\C$ (when $\K = \C$).

	In a later paper, we will also consider the Grassmann algebra, over $\R$,
	of a complex space, and this will allow us to define and study a family
	of foliations (called \emph{real spheres} or \emph{Ptolemy circles}) different from the
	\emph{chains} which are the main focus of this paper. See \emph{e.g.} \cite{Goldman1999}
	for definitions.

	Henceforth, we drop the subscript $\K$ in the notation for the Grassmann
	algebra.
\end{remark}

The subspace of $k$--vectors will be denoted by $\GB^k (E)$\label{def.GBk}. If $n$ is the
dimension of $E$ (over $\K$),
\begin{equation}
	\GB(E) = \bigoplus_{k=0}^n \GB^k(E)
\end{equation}
where $\GB^0(E)$ is \emph{canonically isomorphic} to $\K$, $\GB^1(E)$ is
\emph{canonically isomorphic} to $E$, $\GB^{n-1}(E)$ is \emph{non-canonically isomorphic} to the algebraic dual of
$E$, and $\GB^n(E)$ is \emph{non-canonically isomorphic} to $\K$ (the choice of a basis of
$\GB^n(E)$ is equivalent to the choice of a non-degenerate alternating $n$--linear form).

The above direct sum is \emph{graded}: for $U \in \GB^k(E)$ and $V \in \GB^\ell(E)$,
the progressive product $U \vee V$ belongs to $\GB^{k+\ell}(E)$ (where by definition
$\GB^i(E) = 0$ if $i > n$).

The set of \emph{pure} or \emph{decomposable} $k$--vectors, i.e. $k$--vectors of the form $u_1 \vee \cdots \vee u_k$ ($u_1,\ldots, u_k \in E$)
will be denoted by $\DB^k(E)$\label{def.DBk}. We will often use capital letters $U, V, W$ to denote $k$--vectors, and most of the time we
will consider pure $k$--vectors only.

Bear in mind that unless $k = 1$ or $k = \dim(E) - 1$, $\DB^k(E)$ is not a
vector space.

If $U = u_1 \vee \cdots \vee u_k$ is a non-zero element of $\DB^k(E)$, we denote by
$\Span(U)$\label{def.span} the vector subspace $\K u_1\oplus \cdots \oplus  \K u_k$ of $E$. This is the smallest
vector subspace $E'$ of $E$ such that $U$ belongs to $\GB^k(E')$.

Thus, if $U$ and $V$ are, respectively, a pure $k$--vector and a pure $\ell$--vector,
such that $U \vee V \neq 0$, then $\Span(U \vee V ) = \Span(U) \oplus \Span(V)$.

The basic fact that $k$--dimensional vector subspaces of $E$ are in one-to-one
correspondance with projective classes of elements of $\DB^{k+1}(E)$ will
be used at every moment throughout this paper.

\subsubsection{The regressive product}

We now recall briefly the definition of the \emph{regressive product}\label{def.wedge}. Let $n$ be
the dimension of $E$ (over $\K$). The choice of a non-degenerate alternating
$n$-linear form $\omega$ on $E$ yields a \emph{Hodge isomorphism}
\begin{equation}
	* : \GB(E) \to \GB(E^*)
\end{equation}
that identifies $\GB^k(E)$ with $\GB^{n-k}(E^*)$ (were $E^*$ is the algebraic dual space
of E in the usual sense).

The pull-back, through this isomorphism, of the progressive product
in $\GB(E^*)$ is, by definition, the regressive product in $\GB(E)$, denoted by $\wedge$.

The regressive product depends on the choice of $\omega$; to put it differently,
it depends on the choice of a basis of the 1--dimensional space $\GB^n(E)$.

By definition,
\begin{equation}
	U \wedge V = (U^* \vee V^*)^*
\end{equation}

Let us also recall the definition of the Hodge isomorphism. The bilinear mapping
\begin{equation}
	\GB^k(E) \times \GB^{n-k}(E) \to \GB^n(E)
\end{equation}

defined by
\begin{equation}
	(u_1 \vee \cdots \vee u_k, u_{k+1} \vee \cdots \vee u_n) \mapsto u_1 \vee \cdots \vee u_n
\end{equation}
(and extended by linearity) is composed with the isomorphism
$\GB^n(E) \to \K$ associated to $\omega$,
\begin{equation}
	u_1 \vee \cdots \vee u_n \mapsto \omega(u_1, \ldots, u_n)
\end{equation}

and identifies $\GB^k(E)$ with the dual of $\GB^{n-k}(E)$; this dual is also canonically
isomorphic to $\GB^{n-k}(E^*)$. In this way, we obtain for every $k$ an
isomorphism $\GB^k(E) \to \GB^{n-k}(E^*)$ which is, by definition, the Hodge isomorphism restricted 
to $\GB^k(E)$.

For details, see \cite{Bourbaki1998} or \cite{Barnabei1985}.

The geometric significance of the regressive product should be clear:
if $U \wedge V \neq 0$, where $U$, $V$ are, respectively, a pure $k$--vector and a pure $\ell$--vector, 
and $k+\ell \geq n$, then $U \wedge V$ is a pure $(k+\ell-n)$--vector such that 
$\Span(U \wedge V) = \Span(U) \cap \Span(V)$; if $k+\ell <n$, $U \wedge V=0$.

Endowed with $\vee$ and $\wedge$, $\GB(E)$ is the Grassmann bialgebra of $E$.

\subsubsection{Grassmann extensions of Hermitian forms}\label{sss.grassmann-extensions}

Let $E$ be, as before, a finite-dimensional $\K$-vector space ($\K = \R$ or
$\C$), now endowed with a sesquilinear form $\Phi$; by definition $\Phi(\alpha u, \beta v) =
	\overline{\alpha} \beta \Phi(u, v)$ for $\alpha, \beta  \in \K$ and $u, v \in E$.
If $\K$ is $\R$ this is bilinearity in the usual sense.

Denote by $\overline{E}$ the $\K$--vector space with the same underlying additive
group as $E$ and the $\K$--operation law defined by $\alpha \cdot u= \overline{\alpha} u$
(where the right-hand side denotes the operation of $\alpha$ on $u$ in $E$).

If $\K$ is $\R$, $\overline E$ is equal to $E$, whereas if $\K = \C$ the identity mapping is
an anti-isomorphism $\overline E \to E$.

We now recall, as in \cite{Bourbaki2007}, the canonical extension of the sesquilinear form $\Phi$
to the Grassmann algebra $\GB(E)$.

Fix $k \geq 1$. The mapping $\overline{E}^k \times E^k \to \K$ defined by
\begin{equation}
	(u_1, \ldots, u_k; v_1, \ldots, v_k) \mapsto \Det (\Phi(u_i, v_j ))
\end{equation}

(where $\Det$ is the usual determinant of a $k \times k$ matrix) is $\K$--multilinear.

Since, also, the right-hand side is zero as soon as $u_i = u_j$ or $v_i = v_j$ for some $i \neq j$,
this mapping yields, by the universal property of the Grassmann algebra
(see \cite{Bourbaki1998}, \S 7, Proposition 7), a $\K$--bilinear form $\GB^k(\overline{E}) \times \GB^k(E) \to \K$.

Using the canonical antilinear identication of $\GB^k(\overline{E})$ with $\overline{\GB^k(E)}$, we
obtain a sesquilinear form on $\GB^k(E)$ which we denote by $\GB^k(\Phi)$\label{def.GBkPhi}.

If $\Phi$ is Hermitian, meaning that $\Phi(v, u) = \overline{\Phi(u, v)}$, so is $\GB^k(\Phi)$. Also if $\Phi$
is non-degenerate, $\GB^k(\Phi)$ is non-degenerate as well; if $\Phi$ is definite, $\GB^k(\Phi)$
is definite, and of the same sign.

The following result is basic.

\begin{lemma} \label{lemma.basic}
	If $U_1,U_2 \in \DB^k(E)$, and $V \in \DB^\ell(E)$, are such that $\Span(U_1)$ and $\Span(U_2)$ are both $\Phi$--orthogonal
	to $\Span(V)$,
	\begin{equation}
		\GB^{k+\ell} (\Phi) (U_1 \vee V, U_2 \vee V) = \GB^k (U_1,U_2) \times \GB^\ell (V,V)
	\end{equation}
\end{lemma}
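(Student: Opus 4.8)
The plan is to reduce the asserted identity to the elementary fact that the determinant of a block-diagonal matrix is the product of the determinants of its diagonal blocks. Write $U_1 = u_1 \vee \cdots \vee u_k$, $U_2 = u'_1 \vee \cdots \vee u'_k$ and $V = v_1 \vee \cdots \vee v_\ell$ for chosen decomposable representatives, so that, by associativity of the progressive product, $U_1 \vee V = u_1 \vee \cdots \vee u_k \vee v_1 \vee \cdots \vee v_\ell$ and likewise for $U_2 \vee V$. Since $\GB^{k+\ell}(\Phi)$ is a genuine sesquilinear form on $\GB^{k+\ell}(E)$ — in particular independent of the chosen decomposable representatives — I may compute $\GB^{k+\ell}(\Phi)(U_1 \vee V, U_2 \vee V)$ straight from the definition of the Grassmann extension: it equals $\Det M$, where $M$ is the $(k+\ell)\times(k+\ell)$ matrix with rows indexed by $u_1,\dots,u_k,v_1,\dots,v_\ell$, columns indexed by $u'_1,\dots,u'_k,v_1,\dots,v_\ell$, and $(p,q)$-entry equal to $\Phi$ evaluated on the $p$-th row vector against the $q$-th column vector. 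Both arguments are ordered by the same convention, so no sign ambiguity arises.

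Decompose $M$ into blocks $M = \left(\begin{smallmatrix} A & B \\ C & D \end{smallmatrix}\right)$ with $A = (\Phi(u_i,u'_j))_{1\le i,j\le k}$, $D = (\Phi(v_i,v_j))_{1\le i,j\le \ell}$, $B = (\Phi(u_i,v_j))$ and $C = (\Phi(v_i,u'_j))$. The hypothesis that $\Span(U_1)$ is $\Phi$--orthogonal to $\Span(V)$ says exactly that $B = 0$; the hypothesis that $\Span(U_2)$ is $\Phi$--orthogonal to $\Span(V)$, together with the Hermitian symmetry $\Phi(v,u) = \overline{\Phi(u,v)}$, says that $C = 0$. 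Hence $M$ is block-diagonal, so $\Det M = \Det A \cdot \Det D$; and, again directly from the definition of the Grassmann extensions, $\Det A = \GB^k(\Phi)(U_1,U_2)$ and $\Det D = \GB^\ell(\Phi)(V,V)$. This is the claimed identity.

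I do not expect a genuine obstacle: the whole content is the block-triangular determinant identity, and the rest is unwinding definitions. The two points deserving a little care are, first, matching the entries of $M$ with the definitions of $\GB^k(\Phi)$ and $\GB^\ell(\Phi)$ while keeping straight the conjugate-linearity of $\Phi$ in its first argument and the antilinear identification of $\GB^k(\overline E)$ with $\overline{\GB^k(E)}$; and, second, the degenerate case where $U_1 \vee V$ or $U_2 \vee V$ vanishes. The latter causes no trouble: if $u_1,\dots,u_k,v_1,\dots,v_\ell$ are linearly dependent, the rows of $M$ satisfy the corresponding relation with conjugated coefficients (again because $\Phi$ is antilinear in its first slot), so $\Det M = 0$; and the right-hand side vanishes as well, since a nonzero vector in $\Span(U_1)\cap\Span(V)$ would lie in the radical of $\Phi$ restricted to $\Span(V)$, forcing $\GB^\ell(\Phi)(V,V) = \Det D = 0$. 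When $\Phi$ is positive definite, as in the applications, orthogonality of $\Span(U_1)$ and $\Span(V)$ already excludes this, so only the generic computation is needed.
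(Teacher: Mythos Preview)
Your proof is correct. The paper does not actually supply a proof of this lemma --- it merely flags it as ``basic'' and moves on --- so there is nothing to compare against; your reduction to the block-triangular determinant identity is exactly the natural argument one would expect. One minor remark: you only need $B=0$ (or only $C=0$) to get block-triangularity and hence $\Det M=\Det A\cdot\Det D$, so invoking Hermitian symmetry to kill $C$ as well is not strictly necessary; in particular the identity holds for any sesquilinear $\Phi$ once the orthogonality hypothesis is read so that $B$ vanishes.
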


\subsubsection{Basic properties of the Hermitian norm}\label{ss.basic-properties}
Fix $n \geq 1$ and let again $\K$  be $\R$ or $\C$. We denote the canonical basis 
of $\K^{n+1}$ by $(e_0,\ldots,e_n)$. For any $u, v \in \K^{n+1}$, we denote
the usual Hermitian inner product by
\begin{equation}\label{def.inner-product}
	\inprod{u}{v} = \sum_{i=0}^{n} \overline{x_i} y_i
\end{equation}
(where $u=(x_0,\ldots,x_n)$ and $v=(y_0,\ldots,y_n)$)
and we use the same symbol for the canonical Grassmann extension, i.e.
\begin{equation}\label{def.extended-inner-product}
	\inprod{u_1 \vee \cdots \vee u_k}{v_1 \vee \cdots \vee v_k} = \Det (\inprod{u_i}{v_j})
\end{equation}
(where the right-hand side is the determinant of the $k \times k$ matrix whose $(i, j)$--component
is $\inprod{u_i}{v_j}$. This Grassmann extension is still a Hermitian
inner product on $\GB^k (\K^{n+1})$ and the associated Hermitian norm is denoted, as usual,
by $\| \cdot \|$.

The $n$--dimensional projective space over $\K$  is denoted by $\PB_\K^n$\label{def.projective-space} ; this is
the space of $\K$--vector lines in $\K^{n+1}$. If $\K  = \R$, respectively $\K  = \C$, $\PB^n_\K$ is a
Riemannian manifold of dimension $n$, respectively a Hermitian manifold
of complex dimension $n$ (and real dimension $2n$). In general, if $E$ is some $\K$--vector space,
the symbol $\PB E$\label{def.PB} denotes the projective space associated to $E$ over $\K$.

We will also use the notation $\PB \DB^k (\K^{n+1})$\label{def.PBDBK} to denote the space of projective classes of
elements of $\DB^k (\K^{n+1})$. (Note that $\DB^k (\K^{n+1})$ is not a vector space in general.)

In this paper, the letter $d$ will always denote the \emph{angular metric} on $\PB^n_\K$ defined by
\begin{equation}\label{eq.angular-metric}
	d(u,v) = \Dist{u}{v}
\end{equation}
where $u, v$ are non-zero elements of $\K^{n+1}$. In the left-hand side we are abusing notations and denoting elements of
$\PB^n_\K$ by corresponding elements of $\K^{n+1}$. It seems preferable to slightly abuse notations rather than
use the cumbersome notation $[u],[v]$ when we are dealing with projective classes.

From this definition, the following formula follows at once:
\begin{equation}\label{eq.basic-formula}
	d(u,v)^2 = 1 - \frac{|\inprod{u}{v}|^2}{\|u\|^2 \cdot \|v\|^2} = \sin^2 (\theta)
\end{equation}
where $\theta$ is the (non-oriented) angle from $u$ to $v$.

The orthogonal complement with respect to the Hermitian inner product will be denoted by $\perp$\label{def.perp}.
For example, $v^\perp $ is the space of all vectors
$u \in \K^{n+1}$ such that $\inprod{u}{v} = 0$.

\begin{lemma}\label{lemma.basic-inequality}
	For any $U \in \DB^k(\K^{n+1}), V \in \DB^\ell(\K^{n+1})$,
	\begin{equation}
		\| U \vee V \| \leq \| U \| \cdot \| V \|
	\end{equation}
\end{lemma}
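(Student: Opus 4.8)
The plan is to reduce the inequality to a statement about orthogonal projections, exploiting the fact that the Hermitian norm on $\GB^k$ of a decomposable vector measures the $k$-dimensional volume of the parallelepiped spanned by its factors. Concretely, write $U = u_1 \vee \cdots \vee u_k$ and $V = v_1 \vee \cdots \vee v_\ell$. If $U \vee V = 0$ there is nothing to prove, so assume $U \vee V \neq 0$; then $\Span(U \vee V) = \Span(U) \oplus \Span(V)$ and in particular $u_1, \ldots, u_k, v_1, \ldots, v_\ell$ are linearly independent. Let $W = \Span(V)$ and let $\pi$ denote the orthogonal projection of $\K^{n+1}$ onto $W^\perp$. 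Setting $u_i' = \pi(u_i)$, I would first show that $u_1' \vee \cdots \vee u_k' \vee V = U \vee V$: this is because $u_i - u_i' \in W = \Span(V)$, so subtracting it from the $i$-th factor of $U \vee V$ changes the progressive product by a term containing $V$ twice, hence zero. Write $U' = u_1' \vee \cdots \vee u_k'$.

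Next, since $\Span(U') \subset W^\perp$ is $\Phi$-orthogonal to $W = \Span(V)$, Lemma \ref{lemma.basic} applies and gives
\begin{equation}
	\| U \vee V \|^2 = \| U' \vee V \|^2 = \inprod{U' \vee V}{U' \vee V} = \inprod{U'}{U'} \cdot \inprod{V}{V} = \| U' \|^2 \cdot \| V \|^2.
\end{equation}
So it remains to prove $\| U' \| \leq \| U \|$, i.e. that orthogonal projection does not increase the norm of a decomposable $k$-vector. For this I would use that $\GB^k(\pi)$ is the canonical Grassmann extension of the contraction $\pi$, which is an orthogonal projection of $\K^{n+1}$; it is a general fact (provable by diagonalizing $\pi$ in an orthonormal basis, or by the singular-value / Cauchy--Binet description of $\inprod{\cdot}{\cdot}$ on $\GB^k$) that the induced map $\GB^k(\pi)$ on $\GB^k(\K^{n+1})$ is again an orthogonal projection, in particular has operator norm $\leq 1$. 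Applying this to $U$ gives $\| U' \| = \| \GB^k(\pi) U \| \leq \| U \|$, and combining with the displayed identity yields $\| U \vee V \| \leq \| U \| \cdot \| V \|$.

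The main obstacle is the last step — the claim that $\GB^k(\pi)$ is norm-nonincreasing when $\pi$ is an orthogonal projection. One clean way around any subtlety is to pick an orthonormal basis $(f_1, \ldots, f_{n+1})$ of $\K^{n+1}$ adapted to the decomposition $W^\perp \oplus W$, expand $U$ in the induced orthonormal basis $(f_{i_1} \vee \cdots \vee f_{i_k})_{i_1 < \cdots < i_k}$ of $\GB^k(\K^{n+1})$, and observe that $\GB^k(\pi)$ simply annihilates every basis vector involving an index from $W$ and fixes the rest; hence it is the orthogonal projection onto the span of the surviving basis vectors, which obviously has norm $\leq 1$. Alternatively, one can avoid projections entirely and argue by Gram--Schmidt: replace $v_1, \ldots, v_\ell$ by an orthonormal basis of $W$ (which only rescales $\| V \|$ correctly and multiplies $U \vee V$ by the same scalar), then replace each $u_i$ by $u_i$ minus its orthogonal projection onto $W$, which as noted leaves $U \vee V$ unchanged and, by the Pythagorean theorem applied factor-by-factor together with the Hadamard-type bound $\| u_1 \vee \cdots \vee u_k \| \leq \prod \| u_i \|$, cannot increase $\| U \|$ beyond what is needed. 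Either route is routine once the reduction via Lemma \ref{lemma.basic} is in place; I would present the orthonormal-basis version as the shortest.
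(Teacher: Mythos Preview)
Your proof is correct and follows essentially the same route as the paper: the paper deduces the inequality from Lemma~\ref{lemma.basic-orthogonality}, which packages exactly your steps---showing that $\GB^k(\pi_V^\perp)$ is the orthogonal projection onto $\GB^k(\Span(V)^\perp)$ and that $\|U\vee V\|=\|\Pi_V^\perp(U)\|\cdot\|V\|$---after which the bound follows because orthogonal projections do not increase norm. Your orthonormal-basis verification that $\GB^k(\pi)$ is an orthogonal projection is a fine substitute for the paper's argument via the characterizing identity $\inprod{U}{U'}=\inprod{\GB^k(\pi)(U)}{U'}$.
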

and this is an equality if and only if $\Span(U)$ and $\Span(V)$ are orthogonal.

This follows from the following Lemma which we state separately for
future reference.

\begin{lemma}\label{lemma.basic-orthogonality}
	Let $V \in \DB^\ell(\K^{n+1})$ and denote
	\begin{itemize}
		\item $\pi_V^\perp$ the orthogonal projection $\K^{n+1} \to \Span(V)^\perp$
		\item $\Pi_V^\perp$ the orthogonal projection $\GB^k (\K^{n+1}) \to \GB^k(\Span(V)^\perp)$.
	\end{itemize}
	Then $\GB^k (\pi_V^\perp)=\Pi_V^\perp$ and for any $U \in \DB^k (\K^{n+1})$,
	\begin{equation}
		\| U \vee V \| = \| \Pi_V^\perp (U) \| \cdot \| V \|
	\end{equation}
\end{lemma}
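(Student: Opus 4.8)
The plan is to prove the two assertions in turn: first the operator identity $\GB^k(\pi_V^\perp) = \Pi_V^\perp$, and then to derive the norm formula from it together with Lemma \ref{lemma.basic}. Throughout I assume $V \neq 0$, so that $\Span(V)$, and hence $\pi_V^\perp$, is well defined; set $\ell = \dim \Span(V)$ and $m = n+1-\ell = \dim \Span(V)^\perp$.

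For the identity of the two projections, I would work in an adapted orthonormal basis. Fix an orthonormal basis $(f_1,\dots,f_m)$ of $\Span(V)^\perp$ and an orthonormal basis $(g_1,\dots,g_\ell)$ of $\Span(V)$, so that $(f_1,\dots,f_m,g_1,\dots,g_\ell)$ is an orthonormal basis of $\K^{n+1}$. Directly from the definition \eqref{def.extended-inner-product} of the extended inner product, the products of $k$ distinct vectors from this basis form an orthonormal basis of $\GB^k(\K^{n+1})$ (the relevant Gram determinant is that of a $0$--$1$ permutation-type matrix), and the subfamily of those products using $f$'s only is an orthonormal basis of the subspace $\GB^k(\Span(V)^\perp)$. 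Hence $\Pi_V^\perp$ fixes a basis vector $f_{i_1}\vee\cdots\vee f_{i_a}\vee g_{j_1}\vee\cdots\vee g_{j_b}$ (with $a+b=k$) when $b=0$ and kills it when $b\geq 1$. On the other hand $\GB^k(\pi_V^\perp)$, being the functorial extension of the linear map $\pi_V^\perp$, sends $w_1\vee\cdots\vee w_k$ to $\pi_V^\perp(w_1)\vee\cdots\vee\pi_V^\perp(w_k)$; since $\pi_V^\perp$ fixes each $f_i$ and annihilates each $g_j$, it acts on this orthonormal basis of $\GB^k(\K^{n+1})$ in exactly the same way as $\Pi_V^\perp$. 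Two linear maps that agree on a basis are equal, which settles the first claim.

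For the norm formula, write $U = u_1\vee\cdots\vee u_k$ and split each factor as $u_i = u_i' + u_i''$ with $u_i' = \pi_V^\perp(u_i)\in\Span(V)^\perp$ and $u_i''\in\Span(V)$. Expanding $U\vee V = (u_1'+u_1'')\vee\cdots\vee(u_k'+u_k'')\vee V$ by multilinearity, any term other than $u_1'\vee\cdots\vee u_k'\vee V$ has at least one factor $u_i''\in\Span(V)$, so its span is contained in $\Span(V) + \sum_{j\neq i}\Span(u_j')$, a subspace of dimension at most $\ell + (k-1) < k+\ell$; hence that term is zero. Therefore $U\vee V = (u_1'\vee\cdots\vee u_k')\vee V = \GB^k(\pi_V^\perp)(U)\vee V = \Pi_V^\perp(U)\vee V$, the last step by the first part. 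If $\Pi_V^\perp(U)=0$ both sides of the formula vanish; otherwise $\Pi_V^\perp(U)$ is a nonzero element of $\DB^k(\K^{n+1})$ with $\Span(\Pi_V^\perp(U))\subseteq\Span(V)^\perp$, so $\Span(\Pi_V^\perp(U))$ is orthogonal to $\Span(V)$, and Lemma \ref{lemma.basic} applied with $\Phi = \inprod{\cdot}{\cdot}$ and $U_1 = U_2 = \Pi_V^\perp(U)$ gives $\|U\vee V\|^2 = \|\Pi_V^\perp(U)\vee V\|^2 = \|\Pi_V^\perp(U)\|^2\cdot\|V\|^2$; taking square roots finishes the proof.

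There is no real obstacle: the statement is foundational and the computation is routine. The one point that needs care is the order of logic. Since Lemma \ref{lemma.basic-inequality} is announced as a consequence of the present lemma, I must not use its equality case; instead I extract the orthogonal-case identity $\|W\vee V\| = \|W\|\cdot\|V\|$ from Lemma \ref{lemma.basic} (equivalently, from the block-diagonal form of the Gram matrix in the adapted orthonormal basis constructed above, whose determinant factors as $\Det(\inprod{w_i}{w_j})\cdot 1$). The only mildly substantive ingredient is that wedges of an orthonormal basis are again orthonormal in $\GB^k$, and this is immediate from \eqref{def.extended-inner-product}.
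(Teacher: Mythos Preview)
Your proof is correct and follows the same two-step plan as the paper: establish $\GB^k(\pi_V^\perp)=\Pi_V^\perp$, then deduce the norm identity from $U\vee V=\Pi_V^\perp(U)\vee V$ together with Lemma~\ref{lemma.basic}. The only real difference is in the first step: the paper argues coordinate-free, using the characterization ``a projection $\pi$ onto a subspace $S$ is orthogonal iff $\inprod{x}{\pi(y)}=\inprod{x}{y}$ for all $x\in S$'' and checking this componentwise via the Gram-determinant definition of the extended inner product; you instead fix an adapted orthonormal basis and compare both operators on the induced orthonormal basis of $\GB^k$. Your route is slightly more concrete and perhaps easier to verify at a glance; the paper's route is more in keeping with its coordinate-free philosophy. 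For the second step you are actually more explicit than the paper, which asserts $U\vee V=\GB^k(\pi_V^\perp)(U)\vee V$ ``by definition of the progressive product'' and leaves the invocation of Lemma~\ref{lemma.basic} implicit, whereas you spell out the multilinear expansion and the orthogonality application. Your care in avoiding Lemma~\ref{lemma.basic-inequality} is appropriate.
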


The notation $\GB^k(f)$, where $f$ is a linear mapping with domain $E$,
stands for the extension of $f$ to $\GB^k(E)$, which is characterized by the
relation $\GB^k(f)(u_1 \vee \cdots \vee u_k) = f(u_1) \vee \cdots \vee f(u_k)$ for any $u_1, \ldots, u_k \in E$.
\begin{proof}
	Recall the basic property that a linear projection $\pi$ is orthogonal if and only if
	for any vector $x$ in the image of $\pi$ and any other vector $y$, $\inprod{x}{\pi(y)}$ is equal to $\inprod{x}{y}$.

	To show that $\GB^k (\pi_V^\perp)$ is the orthogonal projection onto $\GB^k (\Span(V)^\perp)$, it is enough
	to check that for any $U \in \DB^k (\K^{n+1})$ and any $U' \in \DB^k (\Span(V)^\perp)$,
	\begin{equation}
		\inprod{U}{U'} = \inprod{\GB^k (\pi_V^\perp) (U)}{U'}
	\end{equation}
	Now by definition the right-hand side is equal to $\Det(\inprod{\pi_V^\perp(u_i)}{u_j'})$ and $\inprod{\pi_V^\perp(u_i)}{u_j'}=\inprod{u_i}{u_j'}$
	by the basic property of orthogonal projections; this determinant is thus equal to $\Det(\inprod{u_i}{u_j'}) = \inprod{U}{U'}$.

	The formula for norms then follows from the fact that $U \vee V = \GB^k (\pi_V^\perp) (U) \vee V$ by definition of the progressive product.
\end{proof}

\subsubsection{First distance formula}\label{ss.first-distance-formula}

\begin{theorem}\label{th.first-distance-formula}
	Let $U= u_0 \vee \cdots \vee u_k$ be a non-zero element of $\DB^{k+1}(\K^{n+1})$ and let
	$w \in \K^{n+1}$ be non-zero. The quantity
	\begin{equation}
		\tau(U,w)=  \Dist{U}{w}
	\end{equation}
	is equal to the distance between $w$ and the $k$--dimensional projective subspace $\PB_\K (\Span(U))$ in $\PB_\K^{n+1}$.
\end{theorem}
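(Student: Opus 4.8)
The plan is to reduce the claim to a computation of the distance from a point to a subspace in terms of orthogonal projections, and then to identify that with the Grassmann-algebraic quantity $\tau(U,w)$ via Lemma~\ref{lemma.basic-orthogonality}. First I would recall that, by definition of the angular metric \eqref{eq.angular-metric} and formula \eqref{eq.basic-formula}, the distance in $\PB_\K^{n+1}$ from (the class of) $w$ to a vector subspace $F = \Span(U)$ of $\K^{n+2}$ is realized by the element of $F$ closest in angle to $w$, namely the orthogonal projection $\pi_F(w)$ of $w$ onto $F$; concretely $d(w, \PB_\K(F)) = \inf_{0 \neq v \in F} d(w,v)$, and a short argument shows this infimum equals $\|\pi_F^\perp(w)\| / \|w\|$, where $\pi_F^\perp$ is the orthogonal projection onto $F^\perp$. (One writes $w = \pi_F(w) + \pi_F^\perp(w)$; the angle from $w$ to any $v \in F$ has sine at least $\|\pi_F^\perp(w)\|/\|w\|$, with equality at $v = \pi_F(w)$ provided $\pi_F(w) \neq 0$; the degenerate case $w \in F^\perp$, giving distance $1$, is handled directly.)

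Next I would bring in the Grassmann side. Applying Lemma~\ref{lemma.basic-orthogonality} with the roles of $U$ and $V$ there played by our $U$ (a pure $(k+1)$-vector, so $\ell = k+1$) and by $w$ (a pure $1$-vector, so $k = 1$), we get
\begin{equation}
\| U \vee w \| = \| \Pi_U^\perp(w) \| \cdot \| U \|,
\end{equation}
where $\Pi_U^\perp$ is the orthogonal projection $\GB^1(\K^{n+2}) \to \GB^1(\Span(U)^\perp)$, which under the canonical identification $\GB^1(E) \cong E$ is just $\pi_U^\perp$, the orthogonal projection onto $\Span(U)^\perp$. Since $\|u \vee w\|$ is unchanged under swapping (up to sign, irrelevant for the norm), $\|U \vee w\| = \|w \vee U\|$, and in any case $\tau(U,w) = \|U \vee w\|/(\|U\|\,\|w\|)$. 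Substituting,
\begin{equation}
\tau(U,w) = \frac{\| U \vee w \|}{\|U\| \cdot \|w\|} = \frac{\|\pi_U^\perp(w)\|}{\|w\|},
\end{equation}
which by the first paragraph is exactly $d(w, \PB_\K(\Span(U)))$.

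I do not expect a serious obstacle here: the content is entirely packaged in Lemma~\ref{lemma.basic-orthogonality}, and the only thing requiring care is the elementary but slightly fiddly verification that the projective distance from a point to a projective subspace is computed by the orthogonal projection onto the orthogonal complement — in particular treating the boundary cases ($w \in \Span(U)$, giving distance $0$ and $U \vee w = 0$; and $w \perp \Span(U)$, giving distance $1$) so that the stated formula holds without exception. A secondary bookkeeping point is the identification of $\GB^1$ with $E$ and the fact that $\|u \vee w\| = \|w \vee u\|$ so that it does not matter which of $U$, $w$ plays the role of the "$V$" in Lemma~\ref{lemma.basic-orthogonality}. Beyond that, the argument is a direct substitution.
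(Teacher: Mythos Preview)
Your proposal is correct and follows essentially the same route as the paper's proof: both reduce $\tau(U,w)$ to $\|\pi_{\Span(U)}^\perp(w)\|/\|w\|$ via the norm identity in Lemma~\ref{lemma.basic-orthogonality} (the paper uses it implicitly through the equality case of Lemma~\ref{lemma.basic-inequality}), and both identify this ratio with the projective distance from $w$ to $\PB(\Span(U))$ by the convexity/angle characterization encoded in \eqref{eq.basic-formula}. The only cosmetic differences are the order of the two steps and your explicit treatment of the degenerate cases; also note the slip ``$\K^{n+2}$'' where $\K^{n+1}$ is meant.
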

We commit the usual abuse of language of denoting by $w$ both a non-zero vector and its image in $\PB_\K^n$.
\begin{proof}
	Let $\pi$ be the orthogonal projection from $\K^{n+1}$ onto $\Span(U)$. Then $w - \pi(w)$ is orthogonal to $\Span(U)$,
	so $\tau(U,w)^2$ is equal  to
	\begin{equation}
		\frac{\| w - \pi(w)\|^2}{\|w\|^2} = 1 - \frac{\| \pi(w) \|^2}{\|w\|^2} = 1 - \sup_{v \in \Span(U)} \frac{| \inprod{w}{v} |^2}{\|w\|^2 \cdot \|v\|^2}
	\end{equation}
	(where we used the convexity of orthogonal projections and ). By using formula \eqref{eq.basic-formula}, we see
	 that the right-hand side of the last equation is equal to
	\begin{equation}
		\inf_{v \in \Span(U)} d(w,v)^2 = d(w,\PB (\Span(U)))^2
	\end{equation}

\end{proof}

\section{Linear foliations in real and complex projective spaces}\label{s.linear-foliations}

In this section, we fix an integer $n \geq 2$ and we work in the $n$--dimensional
$\K$--projective space $\PB_\K^n$.

If $U = u_0 \vee \cdots \vee u_k$ is a non-zero element of $\DB^{k+1}(\K^{n+1})$ ($k \leq n$), we will
denote by $L_U$ the projective subspace $\PB(\Span(U))$ of $\PB_\K^n$. The mapping
$[U] \mapsto L_U$ is a bijection from $\PB \DB^{k+1}(\K^{n+1})$ to the space of $k$--dimensional
$\K$--projective subspaces of $\PB^n_\K$. We will identify these spaces and say ``let
$[u_0 \vee \cdots \vee u_k]$ be a $k$--dimensional projective subspace of $\PB^n_\K$.''

\subsection{Generalized radial foliations}\label{ss.radial-foliations}
Fix an integer $k$, $0 \leq k \leq n-2$. For any $k$--dimensional projective subspace
$L$ of $\PB^n_\K$, and any $x \in \PB^n_\K \setminus L$, there is one and only one $(k+1)$--dimensional
projective subspace of $\PB^n_\K$ containing $L \cup \{x\}$.

To $L$ we may thus associate a foliation of $\PB^n_\K$ by $(k+1)$--dimensional
projective subspaces. We exclude the case $k = n - 1$ because the foliation
is then trivial (it has only one leaf).

(Recall that when we say ``a foliation of $\PB^n_\K$  by projective subspaces'' here we
actually mean ``a foliation of $\PB^n_\K \setminus L_U$ by projective subspaces containing $L_U$, 
with $L_U$ removed''.)

Let us describe this foliation algebraically, thanks to the Grassmann
algebra, in order to perform computations.

Fix a $k$--dimensional projective subspace $[U] = [u_0\vee\cdots\vee u_k] \in \PB \DB^{k+1}(\K^{n+1})$.
of $\PB^n_\K$ and denote by $\Proj_U$\label{def.projU}

\begin{equation}
	\begin{array}{lccc}
		\Proj_U: & \PB_\K^n \setminus L_U & \to     & \PB \DB^{k+1}(\K^{n+1})           \\
		         & [w]                    & \mapsto & [u_0 \vee \cdots \vee u_k \vee w] \\
	\end{array}
\end{equation}

By definition, the fibers of this mapping are exactly the $(k + 1)$--dimensional projective subspaces of $\PB_\K^n$ containing $L_U$.

We are now going to endow $\PB \DB^{k+1}(\K^{n+1})$ with a natural metric, in order to be able
to prove the needed transversality properties for our linear foliations.

\subsection{The angular metric on the codomain}\label{ss.codomain-metric}

We endowed earlier $\GB^\ell (\K^{n+1})$ with a Hermitian structure for $0 \leq \ell \leq n+1$. To this Hermitian space
we may associate its degree 2 Grassmann algebra, $\GB^2(\GB^\ell(\K^{n+1}))$ and this is in turn a Hermitian space in a natural
way. We are now looking at the Grassman algebra arising from the vector space underlying a Grassmann algebra;
its elements are of the form $U \vee V$ , where $U$ and $V$ belong
to $\GB^\ell(\K^{n+1})$, and the reader should be careful not to believe that, somehow,
if $U = u_1 \vee \cdots \vee u_\ell$ and $V = v_1 \vee \cdots \vee v_\ell$, the element $U \vee V$ of
$\GB^2(\GB^\ell(\K^{n+1}))$ could be equal to the element $U \vee V$ of $\GB^{2 \ell}(\K^{n+1})$. These
elements do not sit in the same space. They are the same thing if and
only if $\ell = 1$.

This construction allows to endow $\PB \GB^\ell(\K^{n+1})$  (the projective space associated to 
$\GB^\ell (\K^{n+1}))$ with the metric defined as in \eqref{eq.angular-metric}.

In turn, the restriction of this metric to $\PB \DB^\ell(\K^{n+1})$ endows the space
of $(\ell - 1)$-projective subspaces of $\PB_\K^n$ with a natural metric.

Our aim in the paragraph to follow is to study the distance
between $\Proj_U(w_1)$ and $\Proj_U(w_2)$ for $w_1,w_2 \in \PB_\K^n$.

\subsection{The product formula}\label{ss.product-formula}
\begin{theorem}\label{th.product-formula}
	Let $p \geq 1$. For any element $V \in \GB^{p-1} (\K^{n+1})$ and any $w_1,w_2 \in \PB_\K^n \setminus L_V$,
	\begin{equation}
		\| (V \vee w_1) \vee (V \vee w_2) \| = \| V \| \cdot \| V \vee w_1 \vee w_2 \| \label{eq.product-formula}
	\end{equation}
	where $(V \vee w_1) \vee (V \vee w_2)$ belongs to $\GB^2 (\GB^p (\K^{n+1}))$ and $V \vee w_1 \vee w_2$ belongs to $\GB^{p+1}(\K^{n+1})$.
\end{theorem}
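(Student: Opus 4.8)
The plan is to reduce the identity to Lemma \ref{lemma.basic-orthogonality} by splitting each vector $w_i$ into its component along $\Span(V)$ and its component orthogonal to $\Span(V)$. Write $w_i = v_i + \pi_i$, where $v_i \in \Span(V)$ and $\pi_i = \pi_V^\perp(w_i) \in \Span(V)^\perp$ (with $\pi_i \neq 0$ since $w_i \notin L_V$). Since $V \vee v_i = 0$, we have $V \vee w_i = V \vee \pi_i$ in $\GB^p(\K^{n+1})$, and likewise $V \vee w_1 \vee w_2 = V \vee \pi_1 \vee \pi_2$ in $\GB^{p+1}(\K^{n+1})$. So it suffices to prove the formula with $w_i$ replaced by vectors orthogonal to $\Span(V)$; I will assume this from now on.

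Next I would compute the right-hand side. By Lemma \ref{lemma.basic-orthogonality}, $\|V \vee \pi_1 \vee \pi_2\| = \|\Pi_V^\perp(\pi_1 \vee \pi_2)\| \cdot \|V\| = \|\pi_1 \vee \pi_2\| \cdot \|V\|$, the last step because $\pi_1 \vee \pi_2$ already lies in $\GB^2(\Span(V)^\perp)$. So the right-hand side of \eqref{eq.product-formula} equals $\|V\|^2 \cdot \|\pi_1 \vee \pi_2\|$. For the left-hand side, $V \vee \pi_1$ and $V \vee \pi_2$ are elements of $\GB^p(\K^{n+1})$, and I need the norm of their exterior product in $\GB^2(\GB^p(\K^{n+1}))$, which by definition \eqref{def.extended-inner-product} is the $2\times 2$ Gram determinant
\begin{equation}
\|(V\vee\pi_1)\vee(V\vee\pi_2)\|^2 = \inprod{V\vee\pi_1}{V\vee\pi_1}\inprod{V\vee\pi_2}{V\vee\pi_2} - |\inprod{V\vee\pi_1}{V\vee\pi_2}|^2 .
\end{equation}
Here each inner product is the Grassmann inner product on $\GB^p(\K^{n+1})$, and since $\Span(\pi_i)\perp\Span(V)$, Lemma \ref{lemma.basic} applies: $\inprod{V\vee\pi_i}{V\vee\pi_j} = \inprod{V}{V}\cdot\inprod{\pi_i}{\pi_j}$. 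Substituting, the bracket becomes $\|V\|^4\big(\inprod{\pi_1}{\pi_1}\inprod{\pi_2}{\pi_2} - |\inprod{\pi_1}{\pi_2}|^2\big) = \|V\|^4\,\|\pi_1\vee\pi_2\|^2$, using \eqref{def.extended-inner-product} once more for the $1$-vectors $\pi_1,\pi_2$. Taking square roots gives the left-hand side equal to $\|V\|^2\,\|\pi_1\vee\pi_2\|$, matching the right-hand side.

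The only real subtlety — the step I'd watch most carefully — is keeping straight which Grassmann algebra each product lives in: $(V\vee\pi_1)\vee(V\vee\pi_2)$ is a $2$-vector built from two elements of $\GB^p$, so its norm is literally a $2\times 2$ Gram determinant in the inner product of $\GB^p(\K^{n+1})$, and it would be a mistake to flatten it to an element of $\GB^{2p}(\K^{n+1})$ (indeed $V\vee V = 0$ there). Once that bookkeeping is respected, the whole computation is just two applications of Lemma \ref{lemma.basic} and one of Lemma \ref{lemma.basic-orthogonality}; there is no genuine analytic difficulty. A minor point to note in passing is that $V$ is assumed pure (the statement writes $V \in \GB^{p-1}$ but uses $L_V$ and products $V\vee w_i$, so $V$ is decomposable), which is what licenses the use of Lemmas \ref{lemma.basic} and \ref{lemma.basic-orthogonality}.
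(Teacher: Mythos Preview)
Your proof is correct and follows essentially the same route as the paper's: reduce to $w_i\perp\Span(V)$ via the projection $\pi_V^\perp$, expand the left-hand side as a $2\times 2$ Gram determinant in $\GB^p(\K^{n+1})$, and factor using Lemma~\ref{lemma.basic}. The only cosmetic difference is that for the right-hand side you invoke Lemma~\ref{lemma.basic-orthogonality} where the paper applies Lemma~\ref{lemma.basic} directly; your closing remarks on the bookkeeping of the two Grassmann algebras and on the implicit decomposability of $V$ are apt.
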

\begin{proof}
	Denote by $\pi$ the orthogonal projection $\K^{n+1} \to \Span(V)^\perp$. By the basic properties of progressive product
	we have
	\begin{equation}
		V \vee w_1 = V \vee \pi(w_1)
	\end{equation}
	and similarly for $w_2$; we also have, for the same reasons,
	\begin{equation}
		V \vee w_1 \vee w_2 = V \vee \pi(w_1) \vee \pi(w_2)
	\end{equation}
	Without loss of generality, we can thus assume that $w_1$ and $w_2$ are orthogonal to $\Span(V)$.

	Now, by definition, the square of the left-hand side in equation \eqref{eq.product-formula} is equal to the $2 \times 2$ determinant
	\begin{equation}
		\left| \begin{array}{cc}
			\|V \vee w_1 \|^2               & \inprod{V \vee w_1}{V \vee w_2} \\
			\inprod{V \vee w_2}{V \vee w_1} & \| V \vee w_2\|^2\end{array} \right|
	\end{equation}
	We can apply Lemma \ref{lemma.basic} and the previous determinant is equal to
	\begin{equation}
		\|V\|^4 \cdot \left| \begin{array}{cc} \|w_1\|^2    & \inprod{w_1}{w_2} \\
			\inprod{w_2}{w_1} & \| w_2 \|^2\end{array} \right| = \|V\|^4 \cdot \| w_1 \vee w_2 \|^2
	\end{equation}
	On the other hand, using again orthogonality of $w_1$ and $w_2$ with respect to $\Span(V)$,
	as well as Lemma \ref{lemma.basic} we see that
	\begin{equation}
		\| V \vee w_1 \vee w_2 \| = \| V \| \cdot \|w_1 \vee w_2 \|
	\end{equation}
	and the proof is over.
\end{proof}

For any elements $U \in \DB^k (\K^{n+1})$, $V \in \DB^\ell (\K^{n+1})$, we denote by $\tau(U,V)$ the number
\begin{equation} \label{def.tau}
	\tau(U,V) = \Dist{U}{V}
\end{equation}
where $U \vee V \in \DB^{k+\ell}(\K^{n+1})$. If $k$ (resp. $\ell$) is equal to $1$, this is the distance from
$U$ (resp. $V$) to $\PB (\Span(V))$ (resp. $\PB (\Span(U))$) (Theorem \ref{th.first-distance-formula}). Also, note that $\tau(U,V) \leq 1$.

With this notation, the previous Theorem has the following consequence:

\begin{equation}\label{eq.proj-dist-formula}
	d (\Proj_U (w_1),\Proj_U(w_2)) = \frac{\tau(U,w_1 \vee w_2)}{\tau(U,w_1) \tau(U,w_2)} d(w_1,w_2)
\end{equation}
which will play a crucial role in our analysis.

\subsection{Generalized distance formula}\label{ss.generalized-distance}
In this paragraph, we elucidate the geometric significance of the number
$\tau(U,V)$ introduced above; this is not needed in the rest of the paper.

We start with some notations and a lemma. Fix $q \geq 2$ and $k,\ell \geq 1$.
such that $k + \ell \leq q$. If $V$ is some non-zero decomposable $\ell$-vector of $\K^q$,
i.e. $V \in \DB^\ell (\K^q)$, we let
\begin{equation}
	\GB_0^k (V;\K^q) = \{ U \in \GB^k (\K^q)\ ;\ U \vee V = 0 \}
\end{equation}

This is the \emph{annihilator of $V$ in $\GB^k (\K^q)$}, a vector subspace of $\GB^k (\K^q)$.

\begin{lemma}
	The orthogonal complement of $\GB^k (\Span(V)^\perp)$ in $\GB^k (\K^q)$ is equal to $\GB_0^k (V;\K^q)$.
\end{lemma}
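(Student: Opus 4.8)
The plan is to reduce the statement to an injectivity property of the progressive product, using the identification of orthogonal projections in the Grassmann algebra furnished by Lemma~\ref{lemma.basic-orthogonality}. Write $W=\Span(V)$, so that $\dim_\K W=\ell$ and $\K^q=W\oplus W^\perp$ orthogonally, and let $\pi_V^\perp\colon\K^q\to W^\perp$ be the orthogonal projection. By Lemma~\ref{lemma.basic-orthogonality} (applied with $\K^q$ in place of $\K^{n+1}$ — the dimension of the ambient space plays no role), the extension $\GB^k(\pi_V^\perp)$ is the orthogonal projection of $\GB^k(\K^q)$ onto $\GB^k(W^\perp)=\GB^k(\Span(V)^\perp)$. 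Since the orthogonal complement of the image of an orthogonal projection is its kernel, the orthogonal complement of $\GB^k(\Span(V)^\perp)$ is precisely $\ker\GB^k(\pi_V^\perp)$, and it remains to prove that this kernel coincides with $\GB_0^k(V;\K^q)$.

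For one inclusion I would first record the identity
\[
	U\vee V=\GB^k(\pi_V^\perp)(U)\vee V\qquad(U\in\GB^k(\K^q)).
\]
It is enough to check this on a pure $U=u_1\vee\cdots\vee u_k$ and then invoke bilinearity: writing $u_p=\pi_V^\perp(u_p)+r_p$ with $r_p\in W$ and expanding, every term containing some factor $r_p$ vanishes because $r_p\vee V\in\GB^{\ell+1}(W)=0$ (this is the same bookkeeping already used in the proof of Theorem~\ref{th.product-formula}). The identity immediately yields $\ker\GB^k(\pi_V^\perp)\subseteq\GB_0^k(V;\K^q)$.

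For the reverse inclusion, suppose $U\vee V=0$ and set $X=\GB^k(\pi_V^\perp)(U)\in\GB^k(W^\perp)$; by the identity above, $X\vee V=0$. Now the linear map $\GB^k(W^\perp)\to\GB^{k+\ell}(\K^q)$, $X\mapsto X\vee V$, is injective: after rescaling $V$ (which changes neither $W$ nor the annihilator) we may take a basis $(f_1,\dots,f_\ell)$ of $W$ with $V=f_1\vee\cdots\vee f_\ell$, complete it to a basis $(f_1,\dots,f_\ell,g_1,\dots,g_{q-\ell})$ of $\K^q$, and observe that the monomials $g_{a_1}\vee\cdots\vee g_{a_k}$ $(a_1<\cdots<a_k)$ form a basis of $\GB^k(W^\perp)$ whose images $g_{a_1}\vee\cdots\vee g_{a_k}\vee V$ are, up to sign, distinct members of the monomial basis of $\GB^{k+\ell}(\K^q)$ — and here the hypothesis $k+\ell\le q$ is exactly what makes these monomials nonzero and independent. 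Hence $X=0$, so $U\in\ker\GB^k(\pi_V^\perp)$, and the two spaces coincide.

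The only point needing a little care — the "obstacle", such as it is — is this last injectivity claim: Lemmas~\ref{lemma.basic-inequality} and~\ref{lemma.basic-orthogonality} are stated for decomposable vectors, whereas $X$ need not be decomposable, so one cannot simply quote a norm identity and must argue through an adapted basis (equivalently, one can check that the decomposition $\GB^k(\K^q)=\bigoplus_{i+j=k}\GB^i(W)\vee\GB^j(W^\perp)$ is orthogonal and that $\GB^k(\pi_V^\perp)$ is the projection onto the $i=0$ summand, which kills $V$ on every summand with $i\ge1$ since $\GB^{\ell+1}(W)=0$). Everything else is a direct application of facts already established in the excerpt.
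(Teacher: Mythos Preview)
Your proof is correct and follows essentially the same route as the paper: identify the orthogonal complement of $\GB^k(\Span(V)^\perp)$ with the kernel of $\GB^k(\pi_V^\perp)$, use the identity $U\vee V=\GB^k(\pi_V^\perp)(U)\vee V$, and then invoke injectivity of $X\mapsto X\vee V$ on $\GB^k(\Span(V)^\perp)$. The only difference is that the paper states this last equivalence (``zero if and only if $\GB^k(\pi)(U)=0$'') without justification, whereas you supply the basis argument explicitly; your added care about decomposability is appropriate but not a departure in method.
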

\begin{proof}
	Let $\pi$ be the orthogonal projection $\K^q \to \Span(V)^\perp$. Then $\GB^k (\pi)$
	is the orthogonal projection $\GB^k (\K^q) \to \GB^k(\Span(V)^\perp)$. Here, we denote by
	$\GB^k(\pi)$ the extension of $\pi$ to $\GB^k (\K^q)$.

	Also, it follows from the definition of progressive product that $U \vee V = \GB^k (\pi) (U) \vee V$.
	This $(k+\ell)$-vector is zero if and only if $\GB^k(\pi) (U)=0$, which is equivalent to saying
	that $U$ is orthogonal to $\GB^k (\Span(V)^\perp)$.
\end{proof}

\begin{theorem} \label{th.generalized-distance}
	For $U \in \DB^k (\K^q)$ and $V \in \DB^\ell (\K^q)$,
	\begin{equation}
		\Dist{U}{V} = d(U,\PB \GB_0^k(V;\K^q))
	\end{equation}
	where $U \vee V$ belongs to $\DB^{k+\ell}(\K^q)$.
\end{theorem}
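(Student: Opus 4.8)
The plan is to reduce the statement, via the distance formula \eqref{eq.basic-formula}, to a computation involving the orthogonal projection of $U$ onto the subspace $\GB^k(\Span(V)^\perp)$, and then to invoke Theorem \ref{th.first-distance-formula} (the first distance formula) with the roles played by $\GB^k(\K^q)$ in place of $\K^{n+1}$. First I would recall that by Lemma \ref{lemma.basic-orthogonality} (applied inside $\K^q$), the orthogonal projection $\Pi_V^\perp$ of $\GB^k(\K^q)$ onto $\GB^k(\Span(V)^\perp)$ satisfies $\|U \vee V\| = \|\Pi_V^\perp(U)\| \cdot \|V\|$ for any $U \in \DB^k(\K^q)$. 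Dividing by $\|U\|\cdot\|V\|$ gives
\begin{equation}
	\Dist{U}{V} = \frac{\|\Pi_V^\perp(U)\|}{\|U\|}.
\end{equation}
So the left-hand side measures how much of $U$ (in norm) survives the orthogonal projection onto $\GB^k(\Span(V)^\perp)$.

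Next I would relate this ratio to an angular distance in $\PB\GB^k(\K^q)$. Writing $U = \Pi_V^\perp(U) + R$ where $R$ is the component of $U$ in the orthogonal complement of $\GB^k(\Span(V)^\perp)$, the quantity $\|\Pi_V^\perp(U)\|^2/\|U\|^2 = 1 - \|R\|^2/\|U\|^2$, so that
\begin{equation}
	\left(\Dist{U}{V}\right)^2 = 1 - \frac{\|R\|^2}{\|U\|^2}.
\end{equation}
By the preceding Lemma, the orthogonal complement of $\GB^k(\Span(V)^\perp)$ in $\GB^k(\K^q)$ is precisely the annihilator $\GB_0^k(V;\K^q)$. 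Thus $R$ is the orthogonal projection of $U$ onto the subspace $\GB_0^k(V;\K^q)$, and $\|R\|^2/\|U\|^2 = \sup_{v \in \GB_0^k(V;\K^q)} |\inprod{U}{v}|^2 / (\|U\|^2 \|v\|^2)$ by convexity of orthogonal projection, exactly as in the proof of Theorem \ref{th.first-distance-formula}. Using \eqref{eq.basic-formula} once more, $1 - \|R\|^2/\|U\|^2 = \inf_{v} d(U,v)^2 = d(U, \PB\GB_0^k(V;\K^q))^2$, which is the desired identity after taking square roots.

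In fact the cleanest route is simply to apply Theorem \ref{th.first-distance-formula} verbatim: that theorem, read with the Hermitian space $\GB^k(\K^q)$ in place of $\K^{n+1}$ and the decomposable vector $U$ in place of $w$, says that for any vector subspace $W$ of $\GB^k(\K^q)$, the ratio $\|\,(\text{projection of } U \text{ onto } W^\perp)\,\|/\|U\|$ equals $d(U, \PB W)$; here we take $W = \GB^k(\Span(V)^\perp)$, whose orthogonal complement is $\GB_0^k(V;\K^q)$ by the Lemma. The main point requiring care — and the only genuine obstacle — is the first reduction: one must check that Lemma \ref{lemma.basic-orthogonality}, which was stated for $\K^{n+1}$, applies with $V \in \DB^\ell(\K^q)$ and that the identity $U \vee V = \GB^k(\pi_V^\perp)(U) \vee V$ (the defining property of the progressive product used throughout) is exactly what makes $\|U \vee V\| = \|\Pi_V^\perp(U)\|\cdot\|V\|$ hold here; everything else is a direct transcription of the argument already given for Theorem \ref{th.first-distance-formula}.
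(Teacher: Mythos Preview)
Your argument is correct and is essentially the paper's own proof: use Lemma~\ref{lemma.basic-orthogonality} to rewrite $\|U\vee V\|/(\|U\|\,\|V\|)$ as $\|\Pi_V^\perp(U)\|/\|U\|$, pass to $1-\|R\|^2/\|U\|^2$ with $R$ the projection onto $\GB^k(\Span(V)^\perp)^\perp=\GB_0^k(V;\K^q)$ (the preceding Lemma), and finish via convexity and \eqref{eq.basic-formula} exactly as in Theorem~\ref{th.first-distance-formula}. One small slip in your final paragraph: to read off $d(U,\PB\GB_0^k(V;\K^q))$ from the abstract identity $\|\text{proj}_{W^\perp}(U)\|/\|U\|=d(U,\PB W)$ you must take $W=\GB_0^k(V;\K^q)$ (so that $W^\perp=\GB^k(\Span(V)^\perp)$), not the other way around.
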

In other words, the number $\tau(U,V)$ is equal to the distance, in $\PB \GB^k (\K^q)$,
between $U$ and the projective subspace $\PB \GB_0^k (V;\K^q)$.
\begin{proof}
	Let $\pi$ be the orthogonal projection from $\K^q$ onto $\Span(V)^\perp$.
	We compute, taking into account the previous Lemma,
	\begin{equation}
		1 - \left( \Dist{U}{V} \right)^2 = 1 - \frac{\| \GB^k (\pi) (U) \|^2}{\|U\|^2} =
		\frac{\|U-\GB^k (\pi)(U)\|^2}{\|U\|^2}
	\end{equation}
	and $U - \GB^k (\pi)(U)$ is the image of $U$ through the orthogonal projection onto
	$\GB^k (\Span(V)^\perp)^\perp$. The convexity property of inner product then yields
	\begin{equation}
		\frac{\|U-\GB^k (\pi)(U)\|^2}{\|U\|^2} = \sup_{W} \frac{|\inprod{U}{W}|^2}{\|U\|^2 \cdot \|W\|^2}
	\end{equation}
	where the supremum is relative to $W \in \GB^k(\Span(V)^\perp)^\perp = \GB_0^k (V;\K^q)$. The right-hand side is equal to
	\begin{equation}
		1 - \inf_W \left( \Dist{U}{W} \right)^2 = 1 - \inf_W d(U,W)^2
	\end{equation}
	(where still $W \in \GB^k_0 (V;\K^q)$ and $U \vee W \in \GB^2 (\GB^k (\K^q))$) and the Lemma is proved.
\end{proof}

\subsection{Lipschitz property; transversality; Marstrand-type Theorem}\label{ss.first-transversality}

Recall the setting from the beginning of the section: $n \geq 2$ is fixed, $0 \leq k \leq n-2$
and $U=u_0 \vee \cdots \vee u_k \in \DB^{k+1}(\K^{n+1})$ is a (momentarily fixed)
decomposable $(k+1)$--vector of $\K^{n+1}$.

Introduce the Lipschitz modulus function
\begin{equation} \label{def.phiU}
	\phi_U (w_1,w_2) = \frac{d(\Proj_U(w_1),\Proj_U(w_2))}{d(w_1,w_2)}
\end{equation}
for any pair of distinct $w_1,w_2 \in \PB_\K^n \setminus L_U$.

Recall the formula \eqref{eq.proj-dist-formula}
\begin{equation} \label{eq.phiU-formula}
	\phi_U(w_1,w_2) = \frac{\tau(U,w_1 \vee w_2)}{\tau(U,w_1) \tau(U,w_2)}
\end{equation}

A basic fact is that $\Proj_U$ is ``locally Lipschitz''.
\begin{proposition} \label{prop.lipschitz}
	The restriction of $\Proj_U$ to any compact subspace of $\PB_\K^n \setminus L_U$
	enjoys the Lipschitz property.
\end{proposition}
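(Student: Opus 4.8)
The plan is to reduce the local Lipschitz property to a uniform lower bound on the denominator appearing in formula \eqref{eq.phiU-formula}. Fix a compact set $K \subset \PB_\K^n \setminus L_U$. The function $w \mapsto \tau(U,w)$ is continuous on $\PB_\K^n \setminus L_U$ (it is built from the norm on $\GB^{k+2}(\K^{n+1})$ and the norm on $\K^{n+1}$ via a ratio whose denominator does not vanish on $K$), and by Theorem \ref{th.first-distance-formula} it equals $d(w, L_U)$, which is strictly positive for $w \notin L_U$. Hence on the compact set $K$ it attains a positive minimum; call it $c = \min_{w \in K} \tau(U,w) > 0$.

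Next I would bound the numerator $\tau(U, w_1 \vee w_2)$. By definition $\tau(U, w_1\vee w_2) = \|U \vee w_1 \vee w_2\| / (\|U\| \cdot \|w_1 \vee w_2\|) \leq 1$, as already noted after \eqref{def.tau}. Plugging this into \eqref{eq.phiU-formula} gives, for all distinct $w_1, w_2 \in K$,
\begin{equation}
	\phi_U(w_1,w_2) = \frac{\tau(U,w_1\vee w_2)}{\tau(U,w_1)\,\tau(U,w_2)} \leq \frac{1}{c^2}.
\end{equation}
By the definition \eqref{def.phiU} of $\phi_U$ this says precisely that $d(\Proj_U(w_1),\Proj_U(w_2)) \leq c^{-2}\, d(w_1,w_2)$ for every pair of points of $K$, i.e. $\Proj_U|_K$ is Lipschitz with constant $c^{-2}$.

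The one point requiring a little care — and the main (mild) obstacle — is the continuity and strict positivity of $w \mapsto \tau(U,w)$ on $\PB_\K^n \setminus L_U$, since $K$ is an arbitrary compact subset there rather than, say, a metric ball bounded away from $L_U$. This is exactly where Theorem \ref{th.first-distance-formula} is used: it identifies $\tau(U,w)$ with $d(w, \PB_\K(\Span(U))) = d(w, L_U)$, and the distance-to-a-closed-set function is continuous and vanishes only on $L_U$ itself; compactness of $K$ and $K \cap L_U = \emptyset$ then force $\min_K \tau(U,\cdot) > 0$. Everything else is the formula \eqref{eq.phiU-formula} (a consequence of the product formula, Theorem \ref{th.product-formula}) together with the elementary bound $\tau(U,w_1\vee w_2)\le 1$.
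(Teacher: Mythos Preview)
Your argument is correct and follows essentially the same route as the paper's proof: both use formula \eqref{eq.phiU-formula}, the bound $\tau(U,w_1\vee w_2)\le 1$, and the fact that $w\mapsto\tau(U,w)$ is continuous and strictly positive on $\PB_\K^n\setminus L_U$, hence bounded below on any compact subset. You have simply made explicit (via Theorem~\ref{th.first-distance-formula} and the explicit constant $c^{-2}$) what the paper leaves implicit in a one-line proof.
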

\begin{proof}
	By the above formula \eqref{eq.phiU-formula}, the Proposition follows from the fact
	that the function $w \mapsto \tau(w,U)$ is continuous and non-zero
	in $\PB_\K^n \setminus L_U$.
\end{proof}

The space $\PB \DB^{k+1} (\K^{n+})$ is a Hermitian manifold and carries a natural Lebesgue
measure $\Leb$\label{def.Leb}. This measure can be defined in an elementary way: endow
$(\PB_\K^n)^{k+1} = \PB_\K^n \times \cdots \times \PB_\K^n$ with the product ($k+1$ times)
of the Lebesgue measure of $\PB_\K^n$, and push this product measure forward through the
almost-everywhere defined mapping
\begin{equation}
	\begin{array}{rcl}
		(\PB_\K^n)^{k+1} = \PB_\K^n \times \cdots \times \PB_\K^n & \to     &
		\PB \DB^{k+1} (\K^{n+1})                                                                         \\
		([u_0],\ldots,[u_k])                                      & \mapsto & [u_0 \vee \cdots \vee u_k] \\\end{array}
\end{equation}

We now state our first transversality result.
\begin{proposition} \label{prop.first-transversality}
	For any distinct $w_1,w_2 \in \PB_\K^n$ and any $r>0$,
	\begin{equation}
		\Leb \{ U \in \PB \DB^{k+1} (\K^{n+1})\ ;\ \phi_U(w_1,w_2) < r \} \lesssim r^{\delta_\K (n-k-1)}
	\end{equation}
	uniformly in $w_1,w_2$, where $\delta_\K$ is $1$ if $\K=\R$ and $2$ if $\K=\C$.
\end{proposition}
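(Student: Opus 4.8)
The plan is to (i) rewrite $\phi_U(w_1,w_2)$ so that it becomes the contraction factor of the orthogonal projection $\pi_U^\perp\colon\K^{n+1}\to\Span(U)^\perp$ on the pair $(w_1,w_2)$, (ii) bound $\phi_U(w_1,w_2)$ from below by the smallest principal angle between the fixed plane $\Span(w_1\vee w_2)$ and the random plane $\Span(U)$, and (iii) estimate, by a dimension count on the Grassmannian, the Lebesgue measure of the set of planes $\Span(U)$ that nearly meet the line through $w_1$ and $w_2$.

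For (i): set $b_i=\pi_U^\perp(w_i)$. Theorem \ref{th.first-distance-formula} and its proof give $\tau(U,w_i)=\|b_i\|/\|w_i\|$; and since $U\vee w_1\vee w_2=U\vee b_1\vee b_2$ with $\Span(b_1\vee b_2)$ orthogonal to $\Span(U)$, Lemma \ref{lemma.basic} gives $\|U\vee w_1\vee w_2\|=\|U\|\cdot\|b_1\vee b_2\|$, i.e. $\tau(U,w_1\vee w_2)=\|b_1\vee b_2\|/\|w_1\vee w_2\|$. Substituting into \eqref{eq.phiU-formula} and using the Gram-determinant form \eqref{eq.basic-formula} of the angular metric,
\[
\phi_U(w_1,w_2)=\frac{\|b_1\vee b_2\|\,/\,(\|b_1\|\,\|b_2\|)}{\|w_1\vee w_2\|\,/\,(\|w_1\|\,\|w_2\|)}=\frac{d\big([\pi_U^\perp w_1],[\pi_U^\perp w_2]\big)}{d(w_1,w_2)}\,.
\]

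For (ii): write $P=\Span(w_1\vee w_2)$ and let $s_1\geq s_2\geq0$ be the singular values of $\pi_U^\perp|_P\colon P\to\Span(U)^\perp$. Then $s_2=\min_{0\neq v\in P}\|\pi_U^\perp v\|/\|v\|=\sin\theta_1$, where $\theta_1=\theta_1(P,\Span(U))$ is the smallest principal angle (so $s_2=0$ exactly when $\Span(U)$ meets $P$). Since the extension $\GB^2(\pi_U^\perp)$ carries $w_1\vee w_2$ to $\pi_U^\perp w_1\vee\pi_U^\perp w_2$ and scales the one-dimensional space $\GB^2(P)$ by $s_1s_2$, we get $\|b_1\vee b_2\|=s_1s_2\|w_1\vee w_2\|$; as also $\|b_i\|\leq s_1\|w_i\|$ and $s_1\leq1$ ($\pi_U^\perp$ is $1$-Lipschitz), the formula above yields
\[
\phi_U(w_1,w_2)=\frac{s_1s_2\,\|w_1\|\,\|w_2\|}{\|b_1\|\,\|b_2\|}\ \geq\ \frac{s_2}{s_1}\ \geq\ s_2=\sin\theta_1\big(\Span(w_1\vee w_2),\Span(U)\big)\,.
\]
I expect this inequality to be the main obstacle: one cannot afford to discard the denominators $\tau(U,w_i)$ — replacing $\phi_U$ by $\tau(U,w_1\vee w_2)=s_1s_2$ loses too much, since $s_1$ may itself be small once $k\geq1$ — and they must be made to combine with the numerator into exactly the sharp bound above. (The reverse inequality is false, $\phi_U$ being of order $s_1/s_2$ when $w_1$ or $w_2$ nearly aligns with the small singular direction, but only this direction is needed.)

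For (iii): it suffices to bound $\Leb\{[U]\in\PB\DB^{k+1}(\K^{n+1}):\sin\theta_1(P,\Span(U))<r\}$. First, the almost-everywhere-defined map $(\PB_\K^n)^{k+1}\to\PB\DB^{k+1}(\K^{n+1})$ defining $\Leb$ is equivariant for the action of $\mathrm{O}(n+1)$ (resp. $\mathrm{U}(n+1)$), which preserves the product Lebesgue measure on the source and is transitive on the target; hence $\Leb$ is a constant multiple of the invariant measure on the Grassmannian. Next, writing $Z=\{[U]:\Span(U)\cap P\neq0\}$ and rotating $\Span(U)$ through the angle $\theta_1$ so as to meet $P$, one sees that $\{\sin\theta_1(P,\Span(U))<r\}$ is contained in the $Cr$-neighbourhood of $Z$ for some absolute $C$ (conversely $\sin\theta_1(P,\Span(U))$ is, up to constants, the distance from $[U]$ to $Z$). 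Finally, $Z$ is the algebraic (hence Lipschitz) image of the flag manifold $\{(\ell,S)\,:\,\ell\ \text{a line},\ \ell\subseteq P\cap S\}$, which fibres over $\PB(P)\cong\PB_\K^1$ with fibre $\mathrm{Gr}_\K(k,n)$ and therefore has real dimension $\delta_\K(1+k(n-k))$, that is, $\delta_\K(n-k-1)$ less than $\dim_\R\PB\DB^{k+1}(\K^{n+1})=\delta_\K(k+1)(n-k)$; so $Z$ has finite Hausdorff measure in that dimension, and covering $Z$ by $r$-balls shows that the $\Leb$-measure of its $Cr$-neighbourhood is $\lesssim r^{\delta_\K(n-k-1)}$. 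All implied constants depend only on $n,k,\K$, since the varieties $Z$ attached to different lines are mutually isometric via the group action, which preserves $\Leb$; and the estimate is trivial for $r\geq1$.
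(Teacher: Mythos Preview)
Your argument is correct and takes a genuinely different route from the paper's. The paper simply uses the crude bound $\phi_U(w_1,w_2)\geq\tau(U,W)$ (i.e.\ $\phi_U\geq s_1s_2$ in your notation), which follows at once from \eqref{eq.phiU-formula} and $\tau(U,w_i)\leq1$, and then proves $\Leb\{U:\tau(U,W)<r\}\lesssim r^{\delta_\K(n-k-1)}$ by \emph{induction on $k$}: writing $U=U'\vee u_k$ one has $\tau(U,W)\geq\tau(u_k,U'\vee W)\,\tau(U',W)$, and Fubini (via the product-measure definition of $\Leb$) together with Lemma~\ref{lemma.first-transversality} and the inductive hypothesis does the rest.

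So your parenthetical remark that ``one cannot afford to discard the denominators $\tau(U,w_i)$'' is not accurate as stated: the paper does precisely this, and it works. What is true is that \emph{your} direct approach needs the sharper bound $\phi_U\geq s_2$; with only $\phi_U\geq s_1s_2$, the containment $\{s_1s_2<r\}\subset\{s_2<\sqrt r\,\}$ would halve the exponent in your tube estimate. The induction bypasses this loss.

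Each approach has its merits. The paper's induction is shorter and stays entirely within the Grassmann-algebra formalism set up earlier, never needing to speak of principal angles or Schubert varieties. Your argument is more directly geometric: it identifies the exceptional set as a neighbourhood of the Schubert variety $Z=\{[U]:\Span(U)\cap P\neq0\}$ and reads off the exponent $\delta_\K(n-k-1)$ as its real codimension, which makes the origin of that number transparent and would adapt more readily to other incidence constraints. One small point: in step~(iii) you pass from ``finite Hausdorff measure'' to a covering bound $N(r)\lesssim r^{-d}$; this requires upper box dimension rather than Hausdorff measure alone, but it is clear here since $Z$ is the Lipschitz image of a compact smooth manifold.
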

\begin{proof}
	Since $\phi_U(w_1,w_2) \geq \tau(U,W)$ by \eqref{eq.phiU-formula}, where we let $W=w_1 \vee w_2$, it is enough
	to show that
	\begin{equation}
		\Leb \{ U \in \PB \DB^{k+1} (\K^{n+1}) \ ;\ \tau(U,W) < r \} \lesssim r^{n-k-1}
	\end{equation}
	If $k=0$ this is a special case of Lemma \ref{lemma.first-transversality} below. If $k \geq 1$ we argue by induction
	using the inequality
	\begin{equation}
		\tau(U,W) \geq \tau(u_k,U' \vee W) \tau(U',W)
	\end{equation}
	where $U=u_0 \vee \cdots \vee u_k$ and $U' = u_0 \vee \cdots \vee u_{k-1}$. Lemma \ref{lemma.first-transversality}
	along with Fubini's Theorem yield
	\begin{multline}
		\Leb \{ U \in \PB \DB^{k+1} (\K^{n+1})\ ;\ \tau(U,W) < r \}\\ \lesssim
		r^{\delta_\K (n-k-1)} \int \mathrm{d} \Leb(U') \ \tau(U',W)^{-\delta_\K (n-k-1)}
	\end{multline}

	The induction hypothesis implies that $\int \mathrm{d} \Leb(U') \ \tau(U',W)^{-\delta_\K (n-k-1)}$ is finite,
	and the Proposition follows.
\end{proof}

\begin{lemma}\label{lemma.first-transversality}
	For any $\ell$-dimensional projective subspace $L$ of $\PB_\K^n$
	\begin{equation}
		\Leb \{ u \in \PB_\K^n\ ;\ d(u,L) \leq r\} \lesssim r^{\delta_\K (n-\ell)}
	\end{equation}
	uniformly in $r$.
\end{lemma}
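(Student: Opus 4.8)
The plan is to reduce this to a volume estimate for a metric tube around a projective subspace, which is a classical fact about projective spaces. First I would recall that the angular metric $d$ on $\PB_\K^n$ is, up to a bounded bi-Lipschitz factor, the Riemannian (Fubini--Study) distance: indeed $d(u,v)=\sin\theta$ where $\theta\in[0,\pi/2]$ is the angle, so $\tfrac{2}{\pi}\theta\le d(u,v)\le\theta$, and the Fubini--Study distance is exactly $\theta$ (in the real case) or comparable to it. Hence the set $\{u\ ;\ d(u,L)\le r\}$ is comparable to the metric $r$-neighbourhood of $L$ for the Riemannian metric, and it suffices to bound the Lebesgue (Riemannian volume) measure of that neighbourhood by a constant times $r^{\delta_\K(n-\ell)}$.

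Next I would set up coordinates adapted to $L$. Write $L=\PB(F)$ for an $(\ell+1)$-dimensional subspace $F\subset\K^{n+1}$, and split $\K^{n+1}=F\oplus F^\perp$ with $\dim_\K F^\perp=n-\ell$. For a unit vector $u=(u',u'')$ with $u'\in F$, $u''\in F^\perp$, Theorem~\ref{th.first-distance-formula} (or directly formula \eqref{eq.basic-formula} applied to the orthogonal projection onto $F$) gives $d(u,L)^2=\|u''\|^2$. So the tube is $\{[u]\ ;\ \|u''\|\le r\}$. Parametrizing the complement of $L$ by the affine chart $u'\mapsto$ fixed unit vector in $F^\perp$ direction, or more simply covering $\PB_\K^n$ by finitely many standard affine charts and estimating in each, the condition $\|u''\|\le r$ cuts out, in the codomain $\K^{n-\ell}$ of the "transverse" coordinates, a ball of radius comparable to $r$, whose Lebesgue measure is $\lesssim r^{\delta_\K(n-\ell)}$ since $\K^{n-\ell}$ has real dimension $\delta_\K(n-\ell)$; the remaining ("along $L$") coordinates range over a bounded set, contributing only a constant. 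Summing over the finite chart cover gives the claimed bound, uniformly in $r$ (and the estimate is only meaningful for $r$ bounded, say $r\le 1$, which we may assume since $d\le 1$).

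The main obstacle is bookkeeping rather than conceptual: one must check that the transverse slices really do have volume $\asymp r^{\delta_\K(n-\ell)}$ uniformly, i.e. that the Jacobian of the chart maps is bounded above and below on the region where $\|u''\|\le r$ (for $r\le1$), so that no degeneration of the metric near $L$ or near the "boundary at infinity" of a chart inflates the volume. This is handled by noting that the Fubini--Study volume density is smooth and positive on all of the compact manifold $\PB_\K^n$, so on any chart its pullback is bounded between two positive constants on a fixed compact piece, and finitely many such pieces cover $\PB_\K^n$. One also uses that $L$ being a smooth submanifold of real codimension $\delta_\K(n-\ell)$, its metric tubes have the expected volume growth — precisely the statement being proved, now reduced to the flat model via the charts. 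Since the implied constants depend only on $n,\ell,\K$ and the fixed chart atlas, the bound is uniform in $r$ and in $L$ (all $\ell$-planes being equivalent under the isometry group $\mathrm{PU}(n+1)$ or $\mathrm{PO}(n+1)$), completing the argument.
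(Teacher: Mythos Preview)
Your argument is correct. The paper does not actually give a proof of this Lemma: it simply records that ``This Lemma and its proof are standard'' and moves on. What you have written is exactly the kind of standard tube-volume computation the author has in mind, carried out cleanly: reduce to a fixed $L$ via the transitive action of the isometry group, identify $d(u,L)$ with the norm of the component of a unit representative orthogonal to $\Span(U)$ (this is Theorem~\ref{th.first-distance-formula}), and then read off the volume of the tube in affine charts, the transverse direction having real dimension $\delta_\K(n-\ell)$.

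One small remark: the paper's statement says ``uniformly in $r$'', but in its application (the induction in Proposition~\ref{prop.first-transversality}) the subspace $L$ varies as well, so uniformity in $L$ is also needed. You correctly supply this via the transitivity of $\mathrm{PO}(n+1)$, resp.\ $\mathrm{PU}(n+1)$, on $\ell$-dimensional projective subspaces, which is presumably what the author intends.
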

This Lemma and its proof are standard.

\begin{corollary} \label{cor.first-transversality}
	Fix $k$, $0 \leq k \leq n-2$. Let $A$ be a Borel subset of $\PB_\K^n$ of Hausdorff dimension $s$. For almost
	every $k$--dimensional projective subspace $L$ of $\PB_\K^n$, the transverse dimension of $A$, with respect to
	the foliation of $\PB_\K^n \setminus L$ by $(k+1)$-dimensional projective subspace containing $L$, is equal
	to
	\begin{equation}
		\inf \{\delta_\K(n-k-1),s\}
	\end{equation}
\end{corollary}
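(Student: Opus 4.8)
The plan is to deduce Corollary \ref{cor.first-transversality} from the transversality estimate of Proposition \ref{prop.first-transversality} by the standard Kaufman potential-theoretic argument, carried out locally in the manner announced in the introduction. I split the statement into the lower bound (the substantive half) and the upper bound (which is essentially automatic).

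For the upper bound, fix a compact $K \subset A$ and a $k$--dimensional projective subspace $L = L_U$ disjoint from $K$. By Proposition \ref{prop.lipschitz} the restriction of $\Proj_U$ to $K$ is Lipschitz, hence cannot increase Hausdorff dimension, so the transverse dimension of $K$ with respect to the foliation attached to $L$ is at most $\dim K \le s$. On the other hand $\Proj_U(K)$ lands in $\PB \DB^{k+1}(\K^{n+1})$ restricted to the subspaces through $L_U$, which is a $\K$--projective space of $\K$--dimension $n-k-1$ hence of Hausdorff dimension $\delta_\K(n-k-1)$; so the transverse dimension is also at most $\delta_\K(n-k-1)$. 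Taking the supremum over $K$ gives ``$\le$'' in the Corollary for \emph{every} $L$ (with $A \cap L$ removed, as per our conventions), which in particular holds almost everywhere.

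For the lower bound, set $s_0 = \inf\{\delta_\K(n-k-1), s\}$ and fix $\sigma < s_0$. By Frostman's Lemma there is a compact $K \subset A$ carrying a Borel probability measure $\mu$ with finite $\sigma$--energy $I_\sigma(\mu)$. The difficulty, not present in the textbook Kaufman argument, is that $\Proj_U$ is only \emph{locally} Lipschitz: its Lipschitz modulus $\phi_U$ blows up as either argument approaches $L_U$, and $L_U$ itself varies. To handle this I would fix a compact set $\mathcal{U}_0$ of decomposable $(k+1)$--vectors $U$ of positive Lebesgue measure and a compact $K' \subset K$ with $\mu(K') > 0$ such that $\tau(U, w)$ is bounded below by a positive constant for all $U \in \mathcal{U}_0$ and all $w \in K'$ (such pairs exist: the bad set $\{\tau(U,w) \text{ small}\}$ has small measure uniformly, again by Lemma \ref{lemma.first-transversality}, so one can cover a full-measure part of the parameter space by countably many such $\mathcal{U}_0$, and it suffices to treat each one). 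On $\mathcal{U}_0$ the pushforwards $(\Proj_U)_*\mu|_{K'}$ are well-defined probability measures, and one estimates their $\sigma$--energy by
\begin{equation*}
	\int_{\mathcal{U}_0} I_\sigma\bigl((\Proj_U)_* \mu|_{K'}\bigr)\, \mathrm{d}\Leb(U)
	= \iint \left( \int_{\mathcal{U}_0} \frac{\mathrm{d}\Leb(U)}{d(\Proj_U(w_1),\Proj_U(w_2))^\sigma} \right) \mathrm{d}\mu(w_1)\,\mathrm{d}\mu(w_2).
\end{equation*}
The inner integral is $\int_{\mathcal{U}_0} \phi_U(w_1,w_2)^{-\sigma} d(w_1,w_2)^{-\sigma}\, \mathrm{d}\Leb(U)$, and the distribution-function bound $\Leb\{U : \phi_U(w_1,w_2) < r\} \lesssim r^{\delta_\K(n-k-1)}$ from Proposition \ref{prop.first-transversality}, together with $\sigma < \delta_\K(n-k-1)$, shows that $\int_{\mathcal{U}_0} \phi_U(w_1,w_2)^{-\sigma}\, \mathrm{d}\Leb(U)$ is bounded by a constant independent of $w_1, w_2$. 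Hence the whole expression is $\lesssim I_\sigma(\mu) < \infty$, so for $\Leb$--a.e. $U \in \mathcal{U}_0$ the measure $(\Proj_U)_*\mu|_{K'}$ has finite $\sigma$--energy, whence its support — contained in the quotient $K'/\xi$ — has Hausdorff dimension $\ge \sigma$. Since $K'/\xi$ is a Lipschitz image but also receives a measure of positive energy, its transverse dimension is $\ge \sigma$. Letting $\sigma \uparrow s_0$ along a sequence, and exhausting the full-measure parameter set by countably many compacta $\mathcal{U}_0$, gives the lower bound for a.e. $L$, completing the proof.

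The main obstacle is precisely the bookkeeping in the previous paragraph: making rigorous the reduction to a compact parameter piece $\mathcal{U}_0$ and a compact source piece $K'$ on which $\Proj_U$ is genuinely Lipschitz with uniform control, and checking that the ``bad'' sets discarded at each stage are null, so that the almost-sure conclusion survives the countable exhaustion. Everything else — the energy computation, the use of Frostman's Lemma, and the passage from finite energy to a dimension lower bound — is the classical Kaufman scheme and goes through verbatim once the localization is set up.
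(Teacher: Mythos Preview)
Your proposal is correct and follows the same Kaufman energy argument as the paper, including the recognition that localization is needed because $\Proj_U$ is only locally Lipschitz. The paper organizes the localization a little differently: instead of shrinking both the Frostman support and the parameter set simultaneously and then exhausting, it first chooses two small disjoint closed balls $B_1, B_2$ with $\dim(A \cap B_i) = s$, and for each $i$ works on the open parameter set $O_i = \{U : L_U \cap B_i = \emptyset\}$; since the set of $U$ whose $L_U$ meets both balls can be made to have arbitrarily small Lebesgue measure (because Lebesgue-almost no $k$--plane passes through two prescribed distinct points), $O_1 \cup O_2$ covers the parameter space up to a set of arbitrarily small measure, and the Frostman measure on $A \cap B_i$ never needs to be truncated. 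This two-ball device cleanly bypasses the bookkeeping you flag in your final paragraph, but the substance of the argument is the same.
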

\begin{proof}
	We apply Kaufman's classical argument using the transversality property stated in Proposition
	\ref{prop.first-transversality}. Let us provide some details. We assume $s>0$.

	First, note that if $v_1,v_2$ are two different points of $\PB_\K^n$, the set of $k$--dimensional 
	projective subspaces passing through $v_1$ and $v_2$ has Lebesgue measure $0$ in $\PB \DB^{k+1}(\K^{n+1})$.

	Taking this fact into account, pick $2$  disjoint closed balls
	$B_1, B_2$ such that the Hausdorff dimension of $A \cap B_i$ is $s$ for each $i$,
	and small enough that the set of $k$--dimensional projective subspace of $\PB^n_\K$ meeting both $B_1$ and $B_2$
	has very small Lebesgue measure. This is possible because the Hausdorff dimension of a finite union 
	$\cup X_i$ is the supremum of the Hausdorff dimensions of the $X_i$. 

	Let $O_1,O_2$ be open subsets of $\PB \DB^{k+1}(\K^{n+1})$ such that for any $U \in \overline{O_i}$, the projective subspace
	$L_U$ does not meet $B_i$, and that the complement of $O_1 \cup O_2$ has very small Lebesgue measure in
	$\PB \DB^{k+1}(\K^{n+1})$. Now fix $i=1$ or $2$.
	
	Let $\sigma < \inf \{ s, \delta_\K (n-k-1)\}$ and $\mu$ be a Borel probability measure supported on $A \cap B_i$
	such that the $\sigma$--energy of $\mu$ is finite:
	\begin{equation} \label{def.energy}
		I_{\sigma} (\mu) = \int \frac{\mathrm{d} \mu (w_1) \mathrm{d} \mu(w_2)}{d(w_1,w_2)^{\sigma}} < \infty 
	\end{equation}
	(see \cite{Mattila1995} 8.8 and 8.9).
	We apply Fubini's Theorem:
	\begin{equation}
		\int_{O_i} \mathrm{d} \Leb (U)  I_{\sigma} (\Proj_U (\mu)) = 
		\int \frac{\mathrm{d} \mu (w_1) \mathrm{d} \mu(w_2)}{d(w_1,w_2)^{\sigma}} \int_{O_i} \mathrm{d} \Leb (U)
		\phi_U (w_1,w_2)^{-\sigma}
	\end{equation}
	and we will show that the right-hand side is finite by checking that 
	\begin{equation}
		\int_O \mathrm{d} \Leb(U) \phi_U(w_1,w_2)^{-\sigma}
	\end{equation}
	is bounded by a uniform constant for any distinct $w_1,w_2 \in  B_i$.

	A standard application of Fubini's Theorem followed by a change of variable yields 
	\begin{equation}
		\int_{O_i} \mathrm{d} \Leb(U) \phi_U(w_1,w_2)^{-\sigma} \lesssim 1+\int_0^1 \Leb\{ U \in O_i\ ;\ \phi_U(w_1,w_2) < t\} t^{-(1+\sigma)} \ \mathrm{d}t
	\end{equation}
	(where the constant implied by the notation $\lesssim$ does not depend on $w_1,w_2$). Taking into account 
	Proposition \ref{prop.first-transversality}, we see that the right-hand side is bounded by a uniform constant as soon as 
	$\sigma < \delta_\K (n-k-1)$, which holds by assumption. 

	All in all, we get 
	\begin{equation}
		\int_{O_i} \mathrm{d} \Leb (U)  I_{\sigma} (\Proj_U (\mu)) \lesssim I_\sigma (\mu) < \infty 
	\end{equation}
	showing that for Lebesgue--almost every $U \in O_i$, the transverse dimension of $A \cap B_i$, along the foliation 
	by $(k+1)$--dimensional projective subspaces containing $U$, is at least equal to $\sigma-\varepsilon$. 
	
	Thus for almost every $U \in O_1 \cup O_2$, the transverse dimension of $A$, along the foliation by 
	$(k+1)$--dimensional projective subspaces containing $U$, is at least equal to $\sigma-\varepsilon$. 
	Since $\varepsilon$ was arbitrary, we get the desired conclusion for almost every $U \in O_1 \cup O_2$.

	The Theorem follows from this, because the complement of $O_1 \cup O_2$ has arbitrarily small Lebesgue measure.
\end{proof}

The previous results will now be improved by looking at a special subfamily of foliations.

\subsection{Transversality of pointed foliations}\label{ss.transversality-pointed}
As before, $n \geq 2$ is fixed and we work in $\PB_\K^n$.

\begin{lemma}\label{lemma.intersection-transversality}
	Fix $V \in \DB^n (\K^{n+1})$. For any $U \in \DB^k (\K^{n+1})$ and $W \in \DB^2 (\K^{n+1})$ such that
	\begin{itemize}
		\item $\Span(U) \subset \Span(V)$;
		\item $\Span(W) \not \subset \Span(V)$
	\end{itemize}
	we have
	\begin{equation}
		\Dist{U}{W} \geq \frac{\| U \vee (W \wedge V)\|}{\|U\| \cdot \|W \wedge V \|}
	\end{equation}
\end{lemma}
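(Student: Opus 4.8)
The plan is to reduce the inequality to the first distance formula (Theorem~\ref{th.first-distance-formula}) by replacing the $2$--vector $W$ with a cleverly chosen vector lying inside $\Span(V)$. First I would observe that $\tau(U,W) = d(W, \PB(\Span(U)))$ is not directly what we want, since $\Span(W)$ need not sit inside $\Span(V)$ while $\Span(U)$ does. The right substitute is the vector $w' := W \wedge V$, which by the geometric interpretation of the regressive product (recalled just after its definition) spans the line $\Span(W) \cap \Span(V)$ — provided this intersection is a line, which holds precisely because $\Span(W) \not\subset \Span(V)$, $\dim \Span(W) \leq 2$, and $\Span(V)$ is a hyperplane. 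So $w'$ is a nonzero vector of $\Span(V)$.

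**The key comparison.**

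The heart of the argument is the inequality
\begin{equation}
\tau(U,W) \;=\; \Dist{U}{W} \;\geq\; \Dist{U}{w'} \;=\; \frac{\|U \vee (W\wedge V)\|}{\|U\|\cdot\|W\wedge V\|}. \nonumber
\end{equation}
The last equality is just the definition of $\tau(U,w')$. For the middle inequality I would invoke Theorem~\ref{th.first-distance-formula} twice: the left side equals $d(W, \PB(\Span(U)))$ computed inside $\PB_\K^{n+1}$ — wait, more carefully, $\tau(U,W)$ as defined in \eqref{def.tau} is symmetric and when one factor is a $2$--vector one should instead think of it via $1 - \tau(U,W)^2 = \|\GB^k(\pi)(U)\|^2 / \|U\|^2$ where $\pi$ is orthogonal projection onto $\Span(W)^\perp$. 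So the cleanest route is: $\tau(U,W)$ measures, via Lemma~\ref{lemma.basic-orthogonality}, how much of $U$ survives projection away from $\Span(W)$, whereas $\tau(U,w')$ measures how much survives projection away from the smaller space $\K w' = \Span(W)\cap\Span(V) \subset \Span(W)$. Projecting away from a larger subspace kills at least as much, so $\|\GB^k(\pi_W^\perp)(U)\| \leq \|\GB^k(\pi_{w'}^\perp)(U)\|$, hence $\tau(U,W) \geq \tau(U,w')$. I would make this monotonicity precise by noting $\Span(w') \subseteq \Span(W)$ implies $\Span(W)^\perp \subseteq \Span(w')^\perp$, so $\GB^k(\Span(W)^\perp) \subseteq \GB^k(\Span(w')^\perp)$, and orthogonal projection onto a smaller subspace has smaller norm.

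**Where the work is.**

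The main obstacle is making the two appearances of the symbol $\tau$ rigorous and consistent: in \eqref{def.tau}, $\tau(U,V)$ is defined via $U\vee V$, which presupposes $U\vee V \neq 0$, i.e. $\Span(U)$ and $\Span(W)$ (resp. $\K w'$) meet trivially. I would need to check that under the hypotheses $\Span(U)\cap\Span(W)$ can be nonzero, in which case both sides degenerate gracefully: if $\Span(U)\cap\K w' \neq 0$ then $U\vee(W\wedge V) = 0$ and the right side is $0$, so the inequality is trivial; if $\Span(U)\cap\Span(W)\neq 0$ but $\Span(U)\cap\K w' = 0$, one should interpret $\tau(U,W)$ as $0$ as well (since $\Span(U)$ meets $\Span(W)$) and again there is nothing to prove. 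So the substantive case is when all these intersections are trivial, and there the monotonicity argument above applies verbatim. The secondary point to verify is that $W \wedge V$ is genuinely a $1$--vector: since $\Span(W)$ has dimension $1$ or $2$ and $\Span(V)$ has dimension $n$ inside $\K^{n+1}$, we get $\dim(\Span(W)+\Span(V)) = n+1$ when $\Span(W)\not\subset\Span(V)$, so $\dim(\Span(W)\cap\Span(V)) = \dim\Span(W) - 1 \in \{0,1\}$; the case $\dim\Span(W)=1$ with $\Span(W)\not\subset\Span(V)$ would give $W\wedge V=0$, so implicitly the lemma also assumes $\dim\Span(W)=2$, and then $W\wedge V$ spans a line as claimed.
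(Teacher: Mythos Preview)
Your monotonicity argument has the inequality pointing the wrong way. You correctly observe that $\K w' \subset \Span(W)$ gives $\Span(W)^\perp \subset (\K w')^\perp$, hence $\GB^k(\Span(W)^\perp) \subset \GB^k((\K w')^\perp)$, and therefore $\|\Pi_W^\perp(U)\| \leq \|\Pi_{w'}^\perp(U)\|$. But Lemma~\ref{lemma.basic-orthogonality} says precisely that $\tau(U,W) = \|\Pi_W^\perp(U)\|/\|U\|$ and $\tau(U,w') = \|\Pi_{w'}^\perp(U)\|/\|U\|$, so what you have actually shown is $\tau(U,W) \leq \tau(U,w')$ --- the \emph{opposite} of the claimed inequality. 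A warning sign is that your argument never touches the hypothesis $\Span(U) \subset \Span(V)$: your projection comparison is valid for \emph{every} decomposable $U$, and for generic $U$ it is the reverse inequality $\tau(U,W)\leq\tau(U,W\wedge V)$ that holds (indeed always, by the very monotonicity you invoke). So the approach cannot be repaired without bringing in that hypothesis in an essential way.

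The paper's proof is not a monotonicity argument at all. It chooses $w_2$ spanning $\Span(W)\cap\Span(V)$ and $w_1$ orthogonal to $\Span(V)$ with $W = w_1 \vee w_2$, so that $W\wedge V$ is colinear to $w_2$ and the right-hand side becomes $\tau(U,w_2)$. One then factors
\[
\tau(U,W) \;=\; \tau(U\vee w_2,\,w_1)\cdot \tau(U,w_2)\cdot d(w_1,w_2)^{-1},
\]
and here the hypothesis $\Span(U)\subset\Span(V)$ enters: since both $\Span(U)$ and $w_2$ lie in $\Span(V)$ while $w_1\perp\Span(V)$, the first factor equals $1$; combined with $d(w_1,w_2)\leq 1$ this yields $\tau(U,W)\geq\tau(U,w_2)$. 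The gain over naive monotonicity comes from the orthogonality of $w_1$ to \emph{all} of $\Span(V)$ (hence to all of $\Span(U\vee w_2)$), not merely to $\K w_2$, and this is exactly what a comparison of $\Pi_W^\perp$ with $\Pi_{w'}^\perp$ cannot detect.
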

\begin{proof}
	Let $V=v_1 \vee \cdots \vee v_n$ and assume, as we may, that $(v_1,\ldots,v_n)$ is an orthonormal
	basis of $\Span(V)$. Also, choose $w_2 \in \Span(V)$ and $w_1$ orthogonal to $\Span(V)$ such that
	$W=w_1 \vee w_2$.

	It follows that $W \wedge V$ is colinear to $w_2$; hence
	\begin{equation}
		\frac{\| U \vee (W \wedge V)\|}{\|U\| \cdot \|W \wedge V \|} = \Dist{U}{w_2}
	\end{equation}

	On the other hand,
	\begin{equation}
		\Dist{U}{W} = \Dist{U \vee w_2}{w_1} \times \Dist{U}{w_2} \times d(w_1,w_2)^{-1}
	\end{equation}
	where the first term of the right-hand side is equal to $1$ because $w_1$ was chosen to be orthogonal to $\Span(V)$
	and both $\Span(U)$ and $w_2$ are inside $\Span(V)$; also, we know that $d(w_1,w_2) \leq 1$ (\eqref{eq.angular-metric} and Lemma 
	\ref{lemma.basic-inequality}).

	Hence the Lemma.
\end{proof}

\begin{proposition} \label{prop.second-transversality}
	Fix $V$ as in the Lemma, and let $K$ be some compact subset of $\PB_\K^n \setminus L_V$. For any
	distinct $w_1,w_2 \in K$
	\begin{equation}
		\Leb \{ U \in \PB \DB^{k+1} (\Span(V))\ ;\ \phi_U (w_1,w_2) < r \} \lesssim r^{\delta_\K (n-k-1)}
	\end{equation}
	where as previously $\delta_\K$ is $1$ or $2$ according as $\K$ is $\R$ or $\C$.
\end{proposition}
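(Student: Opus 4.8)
The strategy is to imitate the proof of Proposition \ref{prop.first-transversality}, but with $\Proj_U$ restricted to the smaller parameter space $\PB\DB^{k+1}(\Span(V))$, using Lemma \ref{lemma.intersection-transversality} as the geometric input that replaces the plain distance estimate of Lemma \ref{lemma.first-transversality}. As in that earlier argument, since $\phi_U(w_1,w_2) \geq \tau(U, w_1 \vee w_2)$ by \eqref{eq.phiU-formula}, it suffices to bound
\begin{equation}
	\Leb \{ U \in \PB\DB^{k+1}(\Span(V))\ ;\ \tau(U,W) < r \} \lesssim r^{\delta_\K(n-k-1)}
\end{equation}
uniformly for $W = w_1 \vee w_2$ with $w_1,w_2 \in K$. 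The point of restricting to $K$ is precisely that $\Span(W) \not\subset \Span(V)$ for every such $W$ (since $K \cap L_V = \emptyset$ forbids both $w_1$ and $w_2$ from lying in $L_V$, so $W\wedge V$ is a nonzero vector), and moreover $\|W \wedge V\|$ is bounded below by a positive constant depending only on $K$ and $V$ by compactness and continuity. Hence Lemma \ref{lemma.intersection-transversality} gives $\tau(U,W) \geq \tau(U, W\wedge V)$ up to a uniform constant, where now $W \wedge V$ is a \emph{fixed} nonzero vector lying in $\Span(V)$, so that $\PB(\Span(W\wedge V))$ is a point inside the projective space $\PB(\Span(V)) \cong \PB_\K^{n-1}$.

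With this reduction in hand, the problem is exactly of the type already solved: I must bound the Lebesgue measure of the set of decomposable $(k+1)$-vectors $U$ of the $n$-dimensional space $\Span(V)$ whose associated $k$-dimensional projective subspace $L_U \subset \PB(\Span(V))$ comes within distance $r$ of a fixed point. This is the content of the induction in Proposition \ref{prop.first-transversality} transplanted verbatim into the ambient space $\PB(\Span(V))$: for $k = 0$ it is Lemma \ref{lemma.first-transversality} applied inside $\PB_\K^{n-1}$ (a point at distance $\leq r$ sweeps a ball of measure $\lesssim r^{\delta_\K(n-1)}$, and here $n-k-1 = n-1$); for $k \geq 1$ one uses $\tau(U, W\wedge V) \geq \tau(u_k, U'\vee(W\wedge V))\,\tau(U', W\wedge V)$ with $U = u_0 \vee \cdots \vee u_k$, $U' = u_0 \vee \cdots \vee u_{k-1}$, together with Fubini and the induction hypothesis, noting that the relevant exponent in $\PB(\Span(V))$ is $(n-1) - k - 1 + 1 = n-k-1$, matching the claim. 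The finiteness of the integral $\int \mathrm d\Leb(U')\,\tau(U', W\wedge V)^{-\delta_\K(n-k-1)}$ is guaranteed by that same induction, as in the earlier proof.

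The main obstacle — really the only non-bookkeeping point — is ensuring that the constants are genuinely uniform over the compact set $K$: the passage from $\tau(U,W)$ to $\tau(U,W\wedge V)$ via Lemma \ref{lemma.intersection-transversality} introduces the factor $\|W \wedge V\|^{-1}$ (and, implicitly, $\|W\|$), and one must check that $\inf_{w_1,w_2 \in K} \|w_1 \vee w_2 \wedge V\| / (\|w_1 \vee w_2\|)$, normalized appropriately, is bounded away from $0$. This follows because $w \mapsto w \wedge V$ is continuous and nonzero on $\PB_\K^n \setminus L_V \supset K$, hence bounded below on the compact $K$; the dependence of $W \wedge V$ on the pair $(w_1,w_2)$ rather than on a single point is harmless since we only need a lower bound, not an identification of the point. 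Everything else is a direct transcription of the proof of Proposition \ref{prop.first-transversality} with $\K^{n+1}$ replaced by the $n$-dimensional Hermitian space $\Span(V)$, so I would present it compactly, emphasizing the reduction via Lemma \ref{lemma.intersection-transversality} and referring to the earlier induction for the remaining estimate.
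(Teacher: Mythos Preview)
Your proposal is correct and follows exactly the paper's approach: reduce via Lemma \ref{lemma.intersection-transversality} to replace the $2$--vector $W$ by the genuine vector $W \wedge V \in \Span(V)$, then rerun the induction of Proposition \ref{prop.first-transversality} inside the $(n-1)$--dimensional projective space $\PB(\Span(V))$. One small simplification: your uniformity worry about the factor $\|W \wedge V\|^{-1}$ is unnecessary, since Lemma \ref{lemma.intersection-transversality} already delivers the clean inequality $\tau(U,W) \geq \tau(U, W\wedge V)$ with no constant (the $\|W\wedge V\|$ in the denominator is absorbed into the definition of $\tau$), so the compact set $K$ is only needed to guarantee $\Span(W)\not\subset\Span(V)$.
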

\begin{proof}
	Follow the line of the proof of Proposition \ref{prop.first-transversality}, replacing
	the $2$-vector $W$ with the (genuine) vector $V \wedge W$.
\end{proof}

\begin{corollary} \label{cor.second-transversality}
	Fix $k$ and $V$ as before. Let $A$ be a Borel subset of $\PB_\K^n \setminus L_V$ of Hausdorff
	dimension $s$. For almost every $k$--dimensional projective subspace $L$ of
	$\PB_\K (\Span(V))$, the transverse dimension of $A$ with respect to the foliation of
	$\PB_\K^n \setminus L_V$ by $(k+1)$--dimensional projective subspaces containing $L$,
	is equal to
	\begin{equation}
		\inf \{ \delta_\K (n-k-1),s\}
	\end{equation}
\end{corollary}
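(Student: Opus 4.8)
The plan is to mimic exactly the proof of Corollary \ref{cor.first-transversality}, substituting the transversality estimate of Proposition \ref{prop.second-transversality} for that of Proposition \ref{prop.first-transversality}, and taking care that the ambient parameter space has changed from $\PB\DB^{k+1}(\K^{n+1})$ to $\PB\DB^{k+1}(\Span(V))$, a projective Grassmannian of $k$-planes inside the fixed hyperplane $L_V$. First I would record the upper bound: for any compact $K\subset\PB_\K^n\setminus L_V$ and any $k$-dimensional $L\subset\PB_\K(\Span(V))$, the quotient map $K\to K/\xi_L$ is Lipschitz (Proposition \ref{prop.lipschitz} applies verbatim, since $\Proj_U$ for $U\in\DB^{k+1}(\Span(V))$ is a restriction of the map already studied), so the transverse dimension of $A$ is at most $\dim A=s$; and since each leaf of $\xi_L$ is a $(k+1)$-dimensional projective subspace, the quotient space is essentially $(n-k-1)$-dimensional over $\K$, giving the bound $\delta_\K(n-k-1)$. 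Hence the transverse dimension is at most $\inf\{\delta_\K(n-k-1),s\}$, and only the reverse inequality requires work.

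For the lower bound I would run Kaufman's argument. Fix $\sigma<\inf\{s,\delta_\K(n-k-1)\}$. As in the proof of Corollary \ref{cor.first-transversality}, first note that for two distinct points $v_1,v_2\in\PB_\K^n$, the set of $k$-dimensional projective subspaces of $\PB_\K(\Span(V))$ through both $v_1$ and $v_2$ has zero Lebesgue measure in $\PB\DB^{k+1}(\Span(V))$; then choose disjoint closed balls $B_1,B_2$ with $\dim(A\cap B_i)=s$, small enough that the set of $U\in\PB\DB^{k+1}(\Span(V))$ with $L_U$ meeting $B_i$ is confined to a small-measure region, and pick open sets $O_1,O_2\subset\PB\DB^{k+1}(\Span(V))$ whose union has almost full measure and such that $L_U\cap B_i=\emptyset$ for $U\in\overline{O_i}$. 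Fix $i$ and a Frostman measure $\mu$ on $A\cap B_i$ with $I_\sigma(\mu)<\infty$. By Fubini,
\begin{equation}
	\int_{O_i}\mathrm{d}\Leb(U)\, I_\sigma(\Proj_U(\mu)) = \int\frac{\mathrm{d}\mu(w_1)\,\mathrm{d}\mu(w_2)}{d(w_1,w_2)^\sigma}\int_{O_i}\mathrm{d}\Leb(U)\,\phi_U(w_1,w_2)^{-\sigma},
\end{equation}
and the inner integral is $\lesssim 1+\int_0^1 \Leb\{U\in O_i: \phi_U(w_1,w_2)<t\}\, t^{-(1+\sigma)}\,\mathrm{d}t$, which by Proposition \ref{prop.second-transversality} is bounded by a constant uniform in $w_1,w_2\in B_i\subset K$ precisely because $\sigma<\delta_\K(n-k-1)$. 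Thus $\int_{O_i} I_\sigma(\Proj_U\mu)\,\mathrm{d}\Leb(U)\lesssim I_\sigma(\mu)<\infty$, so for a.e.\ $U\in O_i$ the pushforward $\Proj_U(\mu)$ has finite $\sigma$-energy, whence the transverse dimension of $A\cap B_i$ along $\xi_{L_U}$ is at least $\sigma$. Letting $\sigma\uparrow\inf\{s,\delta_\K(n-k-1)\}$, combining $i=1,2$, and using that $O_1\cup O_2$ exhausts $\PB\DB^{k+1}(\Span(V))$ up to arbitrarily small measure, we conclude that for a.e.\ $L$ the transverse dimension of $A$ is at least $\inf\{\delta_\K(n-k-1),s\}$, completing the proof.

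The only genuinely new ingredient relative to Corollary \ref{cor.first-transversality} is Proposition \ref{prop.second-transversality}, which I would obtain as indicated: following the induction in the proof of Proposition \ref{prop.first-transversality} but replacing the $2$-vector $W=w_1\vee w_2$ by the honest vector $W\wedge V$ (well-defined since $\Span(W)\not\subset\Span(V)=L_V$ because $w_1,w_2\notin L_V$... more precisely because $W\wedge V\neq0$), and invoking Lemma \ref{lemma.intersection-transversality} to bound $\phi_U(w_1,w_2)\geq\tau(U,W)\geq \|U\vee(W\wedge V)\|/(\|U\|\,\|W\wedge V\|)$ from below by a radial-projection quantity relative to the single vector $W\wedge V$ lying in $\Span(V)$. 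The main obstacle — though a minor one — is the bookkeeping needed to see that Lemma \ref{lemma.first-transversality}, applied inside $\PB_\K(\Span(V))\cong\PB_\K^{n-1}$, still yields the exponent $\delta_\K(n-k-1)$ and not $\delta_\K(n-k-2)$: the point is that $W\wedge V$ is a $1$-vector in the $n$-dimensional space $\Span(V)$, so the locus $\{\tau(U,W\wedge V)<r\}$ sits in the $k$-Grassmannian of an $n$-space and has measure $\lesssim r^{\delta_\K((n-1)-k)}=r^{\delta_\K(n-k-1)}$, matching the target; the uniformity over $w_1,w_2\in K$ comes from $K$ being a fixed compact set disjoint from $L_V$, so that $\|W\wedge V\|$ is bounded below there.
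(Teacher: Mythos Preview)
Your proof is correct and matches the paper's intended approach: the paper states this Corollary without proof, leaving implicit that one reruns the argument of Corollary~\ref{cor.first-transversality} with Proposition~\ref{prop.second-transversality} in place of Proposition~\ref{prop.first-transversality}. One simplification you may have missed: the two-ball localization is unnecessary here, since every $L_U$ with $U\in\DB^{k+1}(\Span(V))$ lies inside $L_V$, which is disjoint from $A$ by hypothesis---so you may take a single compact $K\subset A$ (needed for the uniformity in Proposition~\ref{prop.second-transversality}) and set $O_i=\PB\DB^{k+1}(\Span(V))$ throughout.
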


This is very similar to the previous Corollary, but we are now looking at $k$--dimensional
projective subspaces of a fixed projective hyperplane $L_V$, effectively lowering the dimension
of the space of foliations.

More precisely, the space of $k$--dimensional projective subspaces of $\PB_\K^n$ has
dimension $(k+1)(n-k)$ whereas the space of $k$--dimensional projective subspaces of $L_V$
has dimension $(k+1)(n-k-1)$.

In restricting our space of foliations, we did not lose anything dimension-wise. We will see later
that this is not really surprising, by showing how, when $\K=\R$, this Corollary is actually
equivalent to the classical Marstrand--Kaufman--Mattila projection Theorem.

\section{Linear foliations of spheres}\label{s.spheres-foliations}

\subsection{General setup} \label{ss.general-setup}

Let $n \geq 2$. We will deal at the same time with the $(n-1)$-sphere in $\PB_\R^n$ and 
the $(2n-1)$-sphere in $\PB_\C^n$, so let us introduce suitable notations: denote by $\SB, \BB \subset \PB_\K^n$ 
the sets\label{def.S-B}
\begin{equation}
	\left. \begin{array}{lcc}
		\SB^{n-1} & = & \{ [1:x_1:\ldots:x_n] \in \PB_\R^n\ ;\ x_1^2+\cdots+x_n^2 = 1\} \\
		\BB^n     & = & \{ [1:x_1:\ldots:x_n] \in \PB_\R^n\ ;\ x_1^2+\cdots+x_n^2 < 1\}
	\end{array} \right\} \quad \mathrm{if} \quad \K=\R
\end{equation}
\begin{equation}
	\left. \begin{array}{lcc}
		\SB^{2n-1} & = & \{ [1:z_1:\ldots:z_n]\in \PB_\C^n\ ;\ |z_1|^2+\cdots+|z_n|^2 = 1\} \\
		\BB^{2n}   & = & \{ [1:z_1:\ldots:z_n]\in \PB_\C^n\ ;\ |z_1|^2+\cdots+|z_n|^2 < 1\}
	\end{array} \right\} \quad \mathrm{if} \quad \K=\C
\end{equation}

If $L$ is some $k$--dimensional projective subspace of $\PB_\K^n$, let $\mathcal F_L$ be the
foliation of $\SB$ the leaves of which are the intersections of
$\SB$ with $(k+1)$--dimensional projective subspaces of $\PB_\K^n$
containing $L$.

(Remember that we are abusing the language and that what we really mean here is that $\mathcal F_L$
is the foliation of $\SB\setminus (L \cap \SB)$ the leaves of which are the intersections of
$\SB \setminus (L \cap \SB)$ with $(k+1)$--dimensional projective subspaces of $\PB_\K^n$
containing $L$.)

If $\K=\R$, the case $k=0$ is essentially empty and should be removed from consideration.

The leaves of $\mathcal F_L$ are small $k$--spheres if $\K=\R$, respectively small $(2k+1)$-spheres if
$\K=\C$.

Our previous projection results in $\PB_\K^n$ (Corollaries \ref{cor.first-transversality} and \ref{cor.second-transversality}) 
translate without any further work
to interesting projection results in $\SB$. We need only remark that the restriction of $d$
to $\SB$ is equal to the usual angular metric on $\SB$.

\begin{theorem}
	Fix $k$, $0\leq k \leq n-2$, and let $A$ be a Borel subset of $\SB$ of Hausdorff dimension $s$.
	For almost every $k$--dimensional projective subspace $L$ of $\PB_\K^n$, the transverse
	dimension of $A$ with respect to $\mathcal F_L$ is equal to $\inf \{ s, \delta_\K (n-k-1)\}$.
\end{theorem}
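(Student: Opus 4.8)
The plan is to deduce this Theorem directly from Corollaries~\ref{cor.first-transversality} and~\ref{cor.second-transversality}, using the fact that $A \subset \SB$ and that $\SB$ is a (bi-Lipschitz) submanifold of $\PB_\K^n$ whose intrinsic metric agrees with the restriction of the angular metric $d$. The key observation is that the foliation $\mathcal F_L$ of $\SB$ is nothing but the trace on $\SB$ of the linear foliation of $\PB_\K^n \setminus L$ by $(k+1)$--dimensional projective subspaces containing $L$: for each leaf $M$ of the latter, $M \cap \SB$ is the corresponding leaf of $\mathcal F_L$. Consequently, for any compact $K \subset A \subset \SB$, the quotient map $K \to K/\mathcal F_L$ factors through the quotient map $K \to K/(\xi_L|K)$, where $\xi_L$ is the ambient linear foliation; in fact these two quotient maps coincide, since the equivalence relation is the same. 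Therefore the transverse dimension of $A$ with respect to $\mathcal F_L$ equals the transverse dimension of $A$, viewed as a Borel subset of $\PB_\K^n$, with respect to $\xi_L$.

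First I would make this identification precise: remark that the restriction of $d$ to $\SB$ is comparable to (in fact, as the paper notes, equal to) the usual angular metric on $\SB$, so Hausdorff dimensions computed in $\SB$ and in $\PB_\K^n$ agree, and the notion of transverse dimension is unaffected by whether we regard $A$ as living in $\SB$ or in $\PB_\K^n$. Then I would invoke Corollary~\ref{cor.first-transversality} directly: it asserts that for almost every $k$--dimensional projective subspace $L$ of $\PB_\K^n$, the transverse dimension of the Borel set $A$ with respect to the foliation of $\PB_\K^n \setminus L$ by $(k+1)$--dimensional projective subspaces containing $L$ is exactly $\inf\{\delta_\K(n-k-1),s\}$. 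Combining this with the identification of the two transverse dimensions from the previous paragraph yields the Theorem verbatim.

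One small point needs care: Corollary~\ref{cor.first-transversality} is stated for a Borel subset of $\PB_\K^n$, and here $A \subset \SB$; but a Borel subset of $\SB$ is a Borel subset of $\PB_\K^n$, so nothing is lost. A second point: the ``almost every'' in both statements refers to the Lebesgue measure $\Leb$ on $\PB\DB^{k+1}(\K^{n+1})$, which is the same measure in both contexts, so the null sets match up. If one wished instead to appeal to Corollary~\ref{cor.second-transversality} (restricting to subspaces of a fixed hyperplane), the same reduction would apply, but for the present statement Corollary~\ref{cor.first-transversality} suffices and is the cleaner route.

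The main obstacle — and it is a minor one — is simply checking that $M \cap \SB$ is genuinely a leaf of the foliation $\mathcal F_L$ in the sense of Definition~\ref{def.transverse-dimension} (i.e.\ that the atoms are closed and partition $\SB \setminus (L \cap \SB)$), and that the removal of $L \cap \SB$ in the definition of $\mathcal F_L$ is compatible with the removal of $L$ in the ambient foliation; this is exactly the abuse-of-language bookkeeping discussed in~\ref{ss.transverse-dimension}, and involves no real difficulty. Everything else is an immediate transfer of the already-proved Corollaries.
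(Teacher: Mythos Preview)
Your proposal is correct and follows exactly the route the paper takes: the paper states this Theorem immediately after remarking that Corollaries~\ref{cor.first-transversality} and~\ref{cor.second-transversality} ``translate without any further work'' to $\SB$, needing only the observation that the restriction of $d$ to $\SB$ is the usual angular metric. Your write-up is a careful unpacking of precisely that sentence, and the minor bookkeeping points you flag (Borel sets, matching null sets, the abuse-of-language about removing $L$) are exactly the ones the paper leaves implicit.
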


We are going to restrict this family of foliations in order to obtain foliations which can be
described purely in terms of spherical geometry.

For $k \geq 1$, let $\mathcal L_\K^k$\label{def.Lk} be the space of $k$--dimensional projective subspaces
$L$ of $\PB_\K^n$ that meet $\BB$. If $\K=\R$, this is the same thing as the  space of small $(k-1)$--spheres
of $\SB^{n-1}$.

\begin{definition}\label{def.kchain}
	Assume $\K=\C$. For $k \geq 0$, a $k$--chain is the intersection of $\SB^{2n-1}$ with
	a $k$--dimensional projective subspace of $\PB_\C^n$ that meets $\BB^{2n}$.
\end{definition}
A $k$--chain is the complex analogue of a small $k$--sphere; it is also a special case of small $(2k-1)$--sphere.

For example, the space of all small $1$-spheres (i.e. all small circles) of $\SB^3$ has dimension $6$, whereas the
space of all $1$-chains of $\SB^3$ has dimension $4$.

\begin{lemma}
	If $k \geq 1$, $\mathcal L_\K^k$ is an open subset of $\PB \DB^{k+1}(\K^{n+1})$, the space of all
	$k$--dimensional projective subspaces of $\PB_\K^n$.
\end{lemma}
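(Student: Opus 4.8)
The plan is to realise $\mathcal L_\K^k$ as the trace on $\PB\DB^{k+1}(\K^{n+1})$ of an open sublevel set of a continuous function defined on the whole projective space $\PB\GB^{k+1}(\K^{n+1})$, the crucial input being the distance formula of Theorem~\ref{th.first-distance-formula}.

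First I would observe that $\BB$ is simply an open metric ball. Set $o=[e_0]\in\PB_\K^n$. For a non-zero vector $z=(z_0,\ldots,z_n)$, formula \eqref{eq.basic-formula} gives
\[
d([z],o)^2=1-\frac{|\inprod{z}{e_0}|^2}{\|z\|^2}=\frac{|z_1|^2+\cdots+|z_n|^2}{\|z\|^2},
\]
and this is $<1/2$ if and only if $|z_1|^2+\cdots+|z_n|^2<|z_0|^2$; since the points with $z_0=0$ are excluded on both sides, this is exactly the condition defining $\BB$, uniformly in the real and the complex case. Hence $\BB$ is the open $d$-ball of radius $1/\sqrt{2}$ centred at $o$ (and $\SB$ is the corresponding metric sphere), so a $k$-dimensional projective subspace $L$ of $\PB_\K^n$ meets $\BB$ precisely when $d(o,L)<1/\sqrt{2}$.

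Next I would invoke Theorem~\ref{th.first-distance-formula} with $w=e_0$: for $U\in\DB^{k+1}(\K^{n+1})$ it states that $d(o,L_U)=\tau(U,e_0)=\|U\vee e_0\|/\|U\|$ (using $\|e_0\|=1$). Under the identification $[U]\leftrightarrow L_U$ this yields
\[
\mathcal L_\K^k=\Bigl\{[U]\in\PB\DB^{k+1}(\K^{n+1})\ :\ \frac{\|U\vee e_0\|}{\|U\|}<\frac{1}{\sqrt{2}}\Bigr\}.
\]
The quantity $\|U\vee e_0\|/\|U\|$ is unchanged under $U\mapsto\lambda U$ ($\lambda\in\K\setminus\{0\}$), hence defines a function $\tau(\cdot,e_0)$ on $\PB\GB^{k+1}(\K^{n+1})$; this function is continuous, since $U\mapsto U\vee e_0$ is linear, the norm is continuous, and $\|U\|$ does not vanish off the origin. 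Therefore $\{[U]\in\PB\GB^{k+1}(\K^{n+1}):\tau(U,e_0)<1/\sqrt{2}\}$ is open in $\PB\GB^{k+1}(\K^{n+1})$, and intersecting it with the subspace $\PB\DB^{k+1}(\K^{n+1})$ shows that $\mathcal L_\K^k$ is open there.

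I do not expect a genuine obstacle. The two points that need a little care are the identification in the first step --- where one must check the behaviour of the points ``at infinity'' ($z_0=0$) on both sides --- and the remark that $\tau(\cdot,e_0)$ really descends to $\PB\GB^{k+1}(\K^{n+1})$ and is continuous there, so that openness is inherited by the subspace $\PB\DB^{k+1}(\K^{n+1})$. A more hands-on alternative would bypass Theorem~\ref{th.first-distance-formula} altogether: show directly that $[U]\mapsto L_U$ is continuous for the Hausdorff metric on closed subsets of the compact space $\PB_\K^n$ (by lifting a convergent sequence $[U_m]\to[U]$ in $\PB\DB^{k+1}(\K^{n+1})$ to convergent frames $u_i^{(m)}\to u_i$), and then combine this with the fact that $\BB$ is open; but the route through the distance formula avoids all of that frame bookkeeping.
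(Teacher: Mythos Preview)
Your argument is correct. The paper actually states this lemma without proof, treating it as elementary; your route via Theorem~\ref{th.first-distance-formula} --- identifying $\BB$ with the open $d$-ball of radius $1/\sqrt2$ about $[e_0]$ and then expressing ``$L_U$ meets $\BB$'' as the continuous sublevel condition $\tau(U,e_0)<1/\sqrt2$ --- is a clean way to fill the gap using exactly the machinery the paper has set up.
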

In particular, the Lebesgue measure of this space is non-zero. Corollary \ref{cor.first-transversality}
thus implies the following

\begin{theorem}\label{th.sphere-first}
Fix $k$, $1 \leq k \leq n-2$ and let $A$ be a Borel subset of $\SB$ of Hausdorff dimension $s$.
\begin{description}
\item[$\K=\R$ :] For almost every $(k+1)$--tuple $(u_0,\ldots,u_k)$ of points of
$\SB=\SB^{n-1}$, the transverse dimension of $A$, with respect to the foliation of
$\SB^{n-1}$ by small $k$--spheres passing through each of these points, is equal to
\begin{equation}
	\inf\{n-k-1,s\}
\end{equation}
\item[$\K=\C$ :] For almost every $(k+1)$--tuple $(u_0,\ldots,u_k)$ of points of
$\SB=\SB^{2n-1}$, the transverse dimension of $A$, with respect to the foliation of
$\SB^{2n-1}$ by $(k+1)$--chains passing through each of these points, is equal to
\begin{equation}
	\inf\{2(n-k-1),s\}
\end{equation}
\end{description}
\end{theorem}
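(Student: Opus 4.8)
The plan is to derive Theorem~\ref{th.sphere-first} as a direct specialization of Corollary~\ref{cor.first-transversality}, transported from $\PB_\K^n$ to the sphere $\SB$ and then re-parametrized so that the foliation is indexed by a tuple of points rather than by an abstract $k$-dimensional projective subspace. First I would record the trivial but essential observation, already flagged in the text, that the restriction of the angular metric $d$ to $\SB$ coincides (up to biLipschitz equivalence, which is all that matters for Hausdorff dimension and hence for transverse dimension) with the intrinsic metric on $\SB$; consequently the transverse dimension of a Borel set $A \subset \SB$ with respect to $\mathcal F_L$ computed in $\SB$ equals the transverse dimension of $A$, viewed as a Borel subset of $\PB_\K^n$, with respect to the foliation of $\PB_\K^n \setminus L$ by $(k+1)$-dimensional projective subspaces containing $L$. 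This is because the leaves of $\mathcal F_L$ are exactly the traces on $\SB$ of the leaves of the ambient foliation, so the quotient map $K/\mathcal F_L \to K/\xi$ (for $K$ compact in $\SB$) is a biLipschitz embedding onto its image.

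Next I would invoke Corollary~\ref{cor.first-transversality}: for Lebesgue-almost every $k$-dimensional projective subspace $L$ of $\PB_\K^n$, the transverse dimension of $A$ with respect to the ambient foliation equals $\inf\{\delta_\K(n-k-1),s\}$, which is $\inf\{n-k-1,s\}$ when $\K=\R$ and $\inf\{2(n-k-1),s\}$ when $\K=\C$. By the preceding paragraph the same value is the transverse dimension of $A$ with respect to $\mathcal F_L$ for almost every $L$. The remaining task is purely a change of parametrization. Recall from \ref{ss.radial-foliations} that $L_U$ determines and is determined by $U=[u_0 \vee \cdots \vee u_k]$, and that the Lebesgue measure $\Leb$ on $\PB\DB^{k+1}(\K^{n+1})$ was by definition the pushforward of the $(k+1)$-fold product of the Lebesgue measure on $\PB_\K^n$ under $([u_0],\ldots,[u_k]) \mapsto [u_0 \vee \cdots \vee u_k]$. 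Therefore a property holds for $\Leb$-almost every $U$ if and only if it holds for almost every $(k+1)$-tuple $(u_0,\ldots,u_k) \in (\PB_\K^n)^{k+1}$ with respect to the product measure; and ``$L_U$ meets $\BB$'', i.e. $U \in \mathcal L_\K^k$, holds for a positive-measure (indeed, in view of the Lemma just above, open and non-empty, hence full on a neighborhood) set of tuples, so restricting to tuples of points of $\SB$ and demanding that the chain/small-sphere through them be genuine loses nothing. Thus for almost every $(k+1)$-tuple of points $(u_0,\ldots,u_k)$ of $\SB$, the unique $(k+1)$-dimensional projective subspace through $L=L_U$ together with any $x$ defines a leaf, and the leaves of $\mathcal F_L$ are precisely the small $k$-spheres (if $\K=\R$) or $(k+1)$-chains (if $\K=\C$) of $\SB$ passing through $u_0,\ldots,u_k$.

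Finally I would assemble the two cases: combining the almost-sure value from Corollary~\ref{cor.first-transversality}, the metric identification on $\SB$, and the reparametrization by point-tuples, one gets exactly the two displayed formulas $\inf\{n-k-1,s\}$ and $\inf\{2(n-k-1),s\}$. Care must be taken that the ``almost every'' in the point-tuple statement is interpreted with respect to the product of the uniform (Lebesgue) measure on $\SB$; since the radial chart $[1:z_1:\cdots:z_n] \mapsto (z_1,\ldots,z_n)$ identifies a full-measure subset of $\PB_\K^n$ with an affine chart in a biLipschitz, measure-equivalent way, and $\SB$ has positive measure neither in $\PB_\K^n$ — wait, $\SB$ is measure zero in $\PB_\K^n$ — so one must instead use the fact that the space of $k$-dimensional projective subspaces meeting $\BB$ is parametrized, up to sets of measure zero, by tuples of points of $\SB$ via intersection with $\SB$; this is where a small amount of genuine verification is needed. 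I expect this bookkeeping — matching the abstract Lebesgue measure on $\mathcal L_\K^k$ with the product measure on $\SB^{\,k+1}$ and checking that the exceptional null sets correspond — to be the main (and only) obstacle, and it is routine once one notes that a generic $(k+1)$-tuple of points of $\SB$ spans a $k$-dimensional projective subspace meeting $\BB$, and conversely that a generic such subspace meets $\SB$ in a genuine small sphere or chain through $k+1$ generic points.
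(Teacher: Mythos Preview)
Your approach is essentially the paper's own: the paper derives Theorem~\ref{th.sphere-first} in a single sentence from Corollary~\ref{cor.first-transversality}, invoking exactly the measure-equivalence you isolate at the end --- that the Lebesgue measure restricted to the open set $\mathcal L_\K^k$ is equivalent to the pushforward of the product Lebesgue measure on $\SB^{k+1}$ under $(u_0,\ldots,u_k)\mapsto [u_0\vee\cdots\vee u_k]$. Your detour through tuples in $(\PB_\K^n)^{k+1}$ before noticing that $\SB$ is Lebesgue-null there is unnecessary; go straight to the $\SB^{k+1}$ parametrization, as you eventually do.
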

This follows from Corollary \ref{cor.first-transversality} because the restriction of 
the Lebesgue measure to $\mathcal L_\K^k$ is equivalent to the probability measure obtained 
by picking $k+1$ points at random on $\SB$ and looking at the only $k$--dimensional projective subspace 
passing through each of these points.

In the same fashion, Corollary \ref{cor.second-transversality} implies the following
\begin{theorem}\label{th.sphere-second}
    Fix $k$, $1 \leq k \leq n-2$ and let $L \subset \SB$ be a small $(n-2)$--sphere if $\K=\R$, 
    respectively a $(n-2)$--chain if $\K=\C$. Let $A$ be a Borel subset of $\SB$ of Hausdorff
    dimension $s$.
    \begin{description}
        \item[$\K=\R$ :] For almost every $(k+1)$--tuple $(u_0,\ldots,u_k)$ of points of
        $L$, the transverse dimension of $A$, with respect to the foliation of
        $\SB^{n-1}$ by small $k$--spheres passing through each of these points, is equal to
        \begin{equation}
            \inf\{n-k-1,s\}
        \end{equation}
        \item[$\K=\C$ :]  For almost every $(k+1)$--tuple $(u_0,\ldots,u_k)$ of points of
        $L$, the transverse dimension of $A$, with respect to the foliation of
        $\SB^{2n-1}$ by $(k+1)$--chains passing through each of these points, is equal to
        \begin{equation}
            \inf\{2(n-k-1),s\}
        \end{equation}
    \end{description}
\end{theorem}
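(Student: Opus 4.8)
The plan is to deduce Theorem~\ref{th.sphere-second} from Corollary~\ref{cor.second-transversality} in exactly the same way Theorem~\ref{th.sphere-first} was deduced from Corollary~\ref{cor.first-transversality}, the only extra ingredient being the identification of the relevant parameter space. First I would fix $V \in \DB^n(\K^{n+1})$ with $L_V \cap \SB = L$ (possible precisely because $L$ is a small $(n-2)$--sphere, resp.\ an $(n-2)$--chain, i.e.\ $L_V$ meets $\BB$); then $L$ sits inside $\PB_\K(\Span(V))$ and a $k$--dimensional projective subspace $L'$ of $\PB_\K(\Span(V))$ meeting $\BB$ is exactly what is spanned by a generic $(k+1)$--tuple of points of $L$. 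So the assertion ``for almost every $(k+1)$--tuple $(u_0,\ldots,u_k)$ of points of $L$'' is, after pushing forward the product of the arclength measures on $L$ through $([u_0],\ldots,[u_k]) \mapsto [u_0 \vee \cdots \vee u_k]$, the same as ``for almost every $k$--dimensional projective subspace $L'$ of $\PB_\K(\Span(V))$ meeting $\BB$''.

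The key steps, in order, are: (i) observe that the intersection of $\SB$ with the $(k+1)$--dimensional projective subspace containing $L' \cup \{x\}$ is a small $k$--sphere (resp.\ a $(k+1)$--chain) through $u_0,\ldots,u_k$, so the foliation $\mathcal F_{L'}$ of $\SB$ is literally the foliation appearing in the statement; (ii) note, exactly as in the passage following Theorem~\ref{th.sphere-first}, that the image under the spanning map of the product of arclength measures on $L$ is mutually absolutely continuous with the restriction of $\Leb$ on $\PB\DB^{k+1}(\Span(V))$ to the open subset of subspaces meeting $\BB$, so ``almost every tuple'' and ``almost every $L'$'' are interchangeable; (iii) recall that the restriction of $d$ to $\SB$ is the usual spherical metric, so transverse dimension with respect to $\mathcal F_{L'}$ computed inside $\PB_\K^n$ agrees with the one computed inside $\SB$; (iv) apply Corollary~\ref{cor.second-transversality} to $A$ (a Borel subset of $\SB \setminus L \subset \PB_\K^n \setminus L_V$) to conclude that for $\Leb$--almost every such $L'$ the transverse dimension is $\inf\{\delta_\K(n-k-1),s\}$, which is $\inf\{n-k-1,s\}$ when $\K=\R$ and $\inf\{2(n-k-1),s\}$ when $\K=\C$; and finally combine with (ii).

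The only point requiring a little care---and the main (minor) obstacle---is step~(ii): one must check that the spanning map $L^{k+1} \to \PB\DB^{k+1}(\Span(V))$ is smooth with almost-everywhere surjective differential onto the open set $\mathcal L_\K^k \cap \PB\DB^{k+1}(\Span(V))$, so that the pushforward of the (smooth, positive) product measure has a positive density there; this is the analogue, inside the hyperplane $\PB_\K(\Span(V))$, of the absolute-continuity remark already used for Theorem~\ref{th.sphere-first}, and it follows because a generic $(k+1)$--tuple of points of $L$ is projectively independent and small perturbations of the points sweep out a full-dimensional family of subspaces. Everything else is a transcription: the Lipschitz property of $\Proj_U$ (Proposition~\ref{prop.lipschitz}), the transversality estimate (Proposition~\ref{prop.second-transversality}), and Kaufman's argument are all already in place in Corollary~\ref{cor.second-transversality}, so no new analysis is needed.
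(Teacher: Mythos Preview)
Your proposal is correct and follows exactly the route the paper indicates: the paper simply says ``In the same fashion, Corollary~\ref{cor.second-transversality} implies the following'' and states Theorem~\ref{th.sphere-second}, relying on the measure-equivalence remark made after Theorem~\ref{th.sphere-first}, now applied inside the hyperplane $\PB_\K(\Span(V))$. You have spelled out precisely the details the paper leaves implicit---the choice of $V$ with $L_V\cap\SB=L$, the identification of $\mathcal F_{L'}$ with the foliation in the statement, and the absolute continuity of the pushforward of the product arclength measure on $L^{k+1}$ with respect to $\Leb$ on $\PB\DB^{k+1}(\Span(V))$ restricted to subspaces meeting $\BB$---and these are exactly what is needed.
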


When $\K=\C$, the case $k=0$ is missing (because $\SB^{2n-1}$ is negligible for the 
Lebesgue measure on $\PB_\C^n$) and we have to handle it separately.

\subsection{Foliations by $1$-chains} \label{ss.foliations-one-chains}
We now fix $\K=\C$. For every $u \in \SB^{2n-1}\subset\PB^n_\C$, the foliation of $\PB_\C^n$ by projective lines
passing through $u$ induces a foliation of $\SB^{2n-1}$ by 1-chains passing through $u$.

\begin{theorem}\label{th.sphere-third}
Let $A$ be a Borel subset of $\SB^{2n-1}$ of Hausdorff dimension
$s$. For almost every $u \in \SB^{2n-1}$, the transverse  dimension of $A$
with respect to the foliation of $\SB^{2n-1}$ by $1$--chains passing through $u$ is equal to
\begin{equation}
\inf\{s, 2n - 2\}
\end{equation}
\end{theorem}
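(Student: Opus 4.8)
The plan is to recognize this statement as the missing case $k=0$ of Theorem \ref{th.sphere-first} (equivalently, of Corollary \ref{cor.first-transversality} with $\K=\C$): the foliation ``by $1$--chains through $u$'' is parametrized by the point $u\in\SB^{2n-1}$, a $0$--dimensional projective subspace, and the expected transverse dimension is $\inf\{s,\delta_\C(n-1)\}=\inf\{s,2n-2\}$. The only obstruction to quoting Corollary \ref{cor.first-transversality} directly is that its space of foliations, $\PB\DB^1(\C^{n+1})=\PB_\C^n$, is equipped with the Lebesgue measure, for which $\SB^{2n-1}$ is negligible. So I would rerun Kaufman's argument following the proof of Corollary \ref{cor.first-transversality} line by line, but with the point $u$ varying over $\SB^{2n-1}$ endowed with its normalized surface measure $\nu$. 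The upper bound needs nothing new: the transverse dimension is at most $\dim A=s$ by the remark after Definition \ref{def.transverse-dimension}, and, the foliation by $1$--chains through $u$ being (off $u$) a smooth fibration with $1$--dimensional leaves over an open subset of real dimension $2n-2$, the quotient of every compact piece has Hausdorff dimension at most $2n-2$.

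The single step that genuinely has to be revisited is the transversality estimate. For distinct $w_1,w_2\in\SB^{2n-1}$ put $W=w_1\vee w_2$; by \eqref{eq.phiU-formula} and Theorem \ref{th.first-distance-formula},
\begin{equation}
\phi_u(w_1,w_2)=\frac{d(u,L_W)}{d(u,w_1)\,d(u,w_2)}\geq d(u,L_W),
\end{equation}
where $L_W$ is the complex projective line through $w_1$ and $w_2$. Exactly as in the proof of Proposition \ref{prop.first-transversality}, everything then reduces to the following analogue of Lemma \ref{lemma.first-transversality}: given a fixed small ball $B\subset\PB_\C^n$ and a compact set $O\subset\SB^{2n-1}$ with $d(O,B)>0$,
\begin{equation} \label{eq.sphere-transversality}
\nu\{u\in O\ ;\ d(u,L_{w_1\vee w_2})<r\}\lesssim r^{2n-2},
\end{equation}
uniformly over distinct $w_1,w_2\in B\cap\SB^{2n-1}$ and $0<r\leq1$.

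Proving \eqref{eq.sphere-transversality} is where the work lies. Realizing $\SB^{2n-1}$ as the projectivized null cone of the Hermitian form $h$ of signature $(n,1)$, two distinct points of $\SB^{2n-1}$ are never $h$--orthogonal (a form of signature $(n,1)$ has no totally isotropic plane), so $L=L_{w_1\vee w_2}$ always meets $\BB^{2n}$ and carries a genuine $1$--chain $C=L\cap\SB^{2n-1}$, a circle through $w_1$ and $w_2$; moreover $L$ meets $\SB^{2n-1}$ transversally along $C$, since at a point where the tangent plane of $L$ lay in the complex tangent hyperplane, strict pseudoconvexity of $\SB^{2n-1}$ would force $L\cap\SB^{2n-1}$ to shrink to that point. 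I would then split according to the size of $C$. If $d(w_1,w_2)$ is bounded below, $L$ ranges over a compact family of uniformly non--degenerate chain carriers, so the transversality angle, the length and the curvature of $C$ are uniformly controlled; then $\{p\in L\ ;\ d(p,\SB^{2n-1})<r\}$ lies in an $O(r)$--neighborhood of $C$, and the $\rho$--tube around a curve of bounded length and curvature in $\SB^{2n-1}$ (a set of codimension $2n-2$) has $\nu$--measure $\lesssim\rho^{2n-2}$, which gives \eqref{eq.sphere-transversality}. If instead $d(w_1,w_2)$ is small, $L$ is close to a line tangent to $\SB^{2n-1}$ at a point $p_0$ near $B$; such a tangent line meets $\SB^{2n-1}$ only at $p_0$, so for $r$ below a fixed threshold $\{p\in L\ ;\ d(p,\SB^{2n-1})<r\}$ stays in a small neighborhood of $p_0$, hence in an enlargement of $B$ that can be kept disjoint from $O$, and the left-hand side of \eqref{eq.sphere-transversality} simply vanishes for small $r$ (and is trivially $\lesssim r^{2n-2}$ once $r$ is bounded below). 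This dichotomy --- getting the transversality estimate to hold \emph{uniformly}, including for the nearly tangent, degenerate chains --- is what I expect to be the main obstacle; the rest is routine.

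Granting \eqref{eq.sphere-transversality}, Kaufman's argument goes through as in Corollary \ref{cor.first-transversality}. Assume $s>0$, fix $\varepsilon>0$, and (exactly as there) choose two disjoint small balls $B_1,B_2$ with $\dim(A\cap B_i)=s$ and compact sets $O_1,O_2\subset\SB^{2n-1}$ with $d(O_i,B_i)>0$ and $\nu(O_1\cup O_2)>1-\varepsilon$. For $\sigma<\inf\{s,2n-2\}$, Frostman's Lemma gives a probability measure $\mu_i$ on a compact subset of $A\cap B_i$ with $I_\sigma(\mu_i)<\infty$. By Fubini and the identity $d(\Proj_u(w_1),\Proj_u(w_2))=\phi_u(w_1,w_2)\,d(w_1,w_2)$ from \eqref{eq.proj-dist-formula},
\begin{equation}
\int_{O_i}\mathrm{d}\nu(u)\,I_\sigma(\Proj_u(\mu_i))=\int\frac{\mathrm{d}\mu_i(w_1)\,\mathrm{d}\mu_i(w_2)}{d(w_1,w_2)^\sigma}\int_{O_i}\mathrm{d}\nu(u)\,\phi_u(w_1,w_2)^{-\sigma};
\end{equation}
since $d(u,w_i)\geq d(O_i,B_i)>0$, estimate \eqref{eq.sphere-transversality} yields $\int_{O_i}\phi_u(w_1,w_2)^{-\sigma}\,\mathrm{d}\nu(u)\lesssim1$ uniformly in $w_1,w_2$ because $\sigma<2n-2$, whence $I_\sigma(\Proj_u(\mu_i))<\infty$ for $\nu$--a.e. $u\in O_i$. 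For such $u$, $\Proj_u(\mathrm{supp}\,\mu_i)$ has Hausdorff dimension at least $\sigma$; since $\Proj_u$ is Lipschitz on compact subsets of $\SB^{2n-1}\setminus\{u\}$ (Proposition \ref{prop.lipschitz}) and factors through the leaf space, the transverse dimension of $A$ with respect to the foliation by $1$--chains through $u$ is then at least $\sigma$. Finally, letting $\varepsilon\to0$ and $\sigma\uparrow\inf\{s,2n-2\}$ along sequences promotes this to $\nu$--almost every $u\in\SB^{2n-1}$, which together with the upper bound finishes the proof.
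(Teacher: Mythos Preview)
Your overall architecture matches the paper's: localize $A$ into a small ball $B$, restrict $u$ to a compact $O\subset\SB^{2n-1}$ away from $B$, establish the transversality estimate
\[
\Leb_{\SB^{2n-1}}\{u\in O;\ \phi_u(w_1,w_2)<r\}\lesssim r^{2n-2}
\]
uniformly over distinct $w_1,w_2\in B\cap\SB^{2n-1}$, feed this into Kaufman's argument, and then patch over $\SB^{2n-1}$ by an $\varepsilon$--covering. Where you genuinely diverge from the paper is in how you prove this estimate.

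The paper's route is group-theoretic. Fixing a reference $1$--chain $L_0$, it invokes Lemma~\ref{lemma.sphere-third} (the map $gA\mapsto(g\xi^-,g\xi^+)$ from $G/A$ to pairs of distinct boundary points is proper, $G=\mathbf{PU}(1,n)$) to produce a \emph{compact} set $\mathcal G\subset G$ such that every $1$--chain meeting both $A$ and a neighbourhood $\overline{O_\varepsilon}$ is of the form $gL_0$ with $g\in\mathcal G$. The estimate then reduces to the single explicit computation $\Leb_{\SB^{2n-1}}\{u;\ \tau(u,W_0)\le r\}\lesssim r^{2n-2}$ for the fixed $W_0=e_0\vee e_1$; compactness of $\mathcal G$ (equivalently, boundedness of the singular values of $g$) supplies the uniformity in $W=gW_0$ for free.

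Your route is direct-geometric: a dichotomy on the size of $d(w_1,w_2)$, using strict pseudoconvexity and transversality of chain-carrying complex lines to $\SB^{2n-1}$, together with tube-volume bounds around $C=L\cap\SB^{2n-1}$. This is a legitimate alternative and avoids the group $\mathbf{PU}(1,n)$ altogether; the cost is that the uniformity must be extracted by hand from compactness of families of lines, which you correctly flag as the main obstacle. In particular, in the small-$d(w_1,w_2)$ branch your assertion that ``for $r$ below a fixed threshold $\{p\in L;\ d(p,\SB^{2n-1})<r\}$ stays in a small neighbourhood of $p_0$'' needs one more sentence to make the threshold independent of $(w_1,w_2)$: for instance, observe that $(w_1,w_2)\mapsto L_{w_1\vee w_2}$ extends continuously across the diagonal of $(\overline B\cap\SB^{2n-1})^2$ to the complex tangent line, so $\inf_{p\in L\setminus 2B} d(p,\SB^{2n-1})$ is a positive continuous function on a compact family of lines and hence uniformly bounded below. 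With that tightened, your argument goes through. The paper's approach is shorter and stays within its algebraic idiom; yours is more elementary and makes the underlying geometry of chains explicit.
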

\begin{proof}
The Hausdorff dimension of a Borel set $A$ is the supremum of the
Hausdorff dimensions of its compact subsets. Using this fact, we can
assume, without loss of generality, that $A$ is a compact subset of $\SB^{2n-1}$.
Let $O$ be an open subset of $\SB^{2n-1}$ that is non-empty and such that the closure
$\overline{O}$ does not meet $A$. We first show that the conclusion holds for almost
every $u \in O$.

Introduce the set $C_\varepsilon(A, O)$ of all projective lines $[W] \in \PB \DB^2(\C^{n+1})$
that meet $A$ and $\overline{O_\varepsilon}$, where $O_\varepsilon$ is the $\varepsilon$--neighbourhood
of $O$ in $\PB^n_\C$, i.e.

Fix $\varepsilon$ small enough that $A$ does not meet $\overline{O_\varepsilon}$. Let $G = \mathbf{PU}(1, n)$ and
 fix as in Lemma \ref{lemma.sphere-third} a compact subset $\mathcal G$ of $G$ such that any 1-chain meeting
$A$ and $\overline{O_\varepsilon}$ is of the form $gL_0$, where $g \in \mathcal G$ and $L_0$ is the 1-chain passing
through, say, $[e_0 + e_1]$ and $[e_0 - e_1]$. (Recall that $(e_0,\ldots,e_n)$ is the canonical basis of $\K^{n+1}$.)

I claim that for any $r > 0$, and any distinct $w_1, w_2 \in A$
\begin{equation}
\Leb_{\SB^{2n-1}} \{u \in O\ ;\ \tau (u, W ) \leq r\} \lesssim r^{2n-2}
\end{equation}
where $W = w_1 \vee w_2$ and the constant implied by the notation $\lesssim$ is uniform
in $r, W$ and $u$.

We can assume that $r$ is small enough (with respect to the $\varepsilon$ fixed
above) that the projective line $[W ]$ has to meet both $A$ and $\overline{O_\varepsilon}$ in order for the left-hand side to be non-zero.

Let $g$ be an element of $\mathcal G$ such that $[W ] = [gW_0]$
where $W_0 = e_0 \vee e_1$. This is possible because $(e_0 + e_1) \vee (e_0 - e_1)$ is a
scalar multiple of $e_0 \vee e_1$.

Now
\begin{multline}
\Leb_{\SB^{2n-1}} \{u \in O\ ;\ \tau (u, gW_0) \leq r\} \lesssim \Leb_{\SB^{2n-1}} \{u \in O\ ;\ \tau (u, W_0) \lesssim r\} \\
 \lesssim  \Leb_{\SB^{2n-1}} \{u \in \SB^{2n-1}\ ;\ \tau (u, W_0) \leq r\} \lesssim r^{2n-2}
\end{multline}
where the first inequality follows from the compacity of $\mathcal G$ (and the 
subsequent fact that the singular values of $g$ belong to a compact subset of
$]0, +\infty[$) and the last inequality is an easy computation.

At this point we can apply Kaufman's argument; this yields that for Lebesgue-almost every $u \in O$, the transverse dimension of $A$ with respect
to the foliation of $\SB^{2n-1}$ by 1-chains passing through $u$ is equal to
$\inf\{s, 2(n - 1)\}$.

Now let $x$ be any point of $\SB^{2n-1}$. For any $\varepsilon > 0$ we can find $\delta > 0$
such that
\begin{equation}
\dim(\K  \setminus B(x, \delta)) \geq s - \varepsilon
\end{equation}
(where $\dim$ is the Hausdorff dimension). Taking into account the previous statement, 
it follows that for Lebesgue-almost every $u \in B(x, \delta/2)$, the transverse dimension of $K$, with respect
to the foliation by 1-chains passing through $u$, is at least equal to $\inf\{s - \varepsilon, 2(n - 1)\}$.

The compacity of $\SB^{2n-1}$ then implies that for Lebesgue-almost every
$u \in \SB^{2n-1}$, the transverse dimension of $K$, with respect to the foliation
by 1-chains passing through $u$, is at least $\inf\{s - \varepsilon, 2(n - 1)\}$.

The Theorem follows by letting $\varepsilon$ go to 0 along a countable sequence.
\end{proof}

\begin{lemma}\label{lemma.sphere-third}
Let $K^-$, $K^+$ be disjoint non-empty compact subsets of $\SB^{2n-1}$
and let $L_0$ be a fixed $1$--chain. There is a compact subset $\mathcal G$ of G such that
any $1$--chain meeting both $K^-$ and $K^+$ is of the form $gL_0$ for some $ g \in \mathcal G$.
\end{lemma}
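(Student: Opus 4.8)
The plan is to use the fact that $G = \mathbf{PU}(1,n)$ acts transitively on the set of all $1$--chains, together with a compactness/properness argument. First I would recall why the action is transitive: a $1$--chain is $\SB^{2n-1} \cap L$ where $L$ is a projective line meeting $\BB^{2n}$, and $\mathbf{PU}(1,n)$ — the group of projective transformations preserving the Hermitian form of signature $(1,n)$, equivalently the group of isometries of complex hyperbolic space $\BB^{2n}$ together with its boundary $\SB^{2n-1}$ — acts transitively on such lines (this is standard; see \cite{Goldman1999}). Hence every $1$--chain is $gL_0$ for some $g \in G$, and the stabilizer of $L_0$ is a closed subgroup $H$, so the space of $1$--chains is identified with $G/H$, and the map $G \to G/H$ is a continuous open surjection.

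The key step is then: the set of $1$--chains meeting both $K^-$ and $K^+$ is a \emph{compact} subset of $G/H$. Indeed, it is closed — if $g_j L_0$ meets $K^-$ and $K^+$ and $g_j L_0 \to g L_0$ in $G/H$, the limit $1$--chain still meets the compact sets $K^-$, $K^+$ (using that a convergent sequence of $1$--chains converges, say, in the Hausdorff metric on compact subsets of $\SB^{2n-1}$, and that $K^\pm$ are closed). It is also contained in a compact set: a $1$--chain meeting $K^-$ is $\SB^{2n-1}\cap L$ with $L$ a projective line through some point of $K^-$; the set of such lines is compact in $\PB\DB^2(\C^{n+1})$ since $K^-$ is compact, and intersecting further with "meets $K^+$" only shrinks it. So the set $\mathcal C$ of relevant $1$--chains is compact in $G/H$.

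Finally I would lift $\mathcal C$ to a compact subset $\mathcal G$ of $G$. This is exactly properness of the quotient map $G \to G/H$ onto compact sets: since $H$ is a closed subgroup of the Lie group $G$, the projection $\pi : G \to G/H$ admits, over any compact subset of $G/H$, a compact preimage-section — concretely, pick a compact neighbourhood structure or a continuous local section and use that $H$ itself need not be compact but $\pi^{-1}(\mathcal C)$ can be replaced by a compact set $\mathcal G$ with $\pi(\mathcal G) = \mathcal C$ (one can, e.g., choose for each $c \in \mathcal C$ a small compact neighbourhood $V_c$ in $G/H$ and a compact $W_c \subset G$ with $\pi(W_c) \supset V_c$, extract a finite subcover, and set $\mathcal G = \bigcup W_{c_i}$). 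Then any $1$--chain meeting $K^-$ and $K^+$ lies in $\mathcal C$ and hence equals $gL_0$ for some $g \in \mathcal G$. The main obstacle is the bookkeeping in this last step — making precise that the quotient map is open with compact lifts of compact sets — but this is a soft, standard fact about Lie group quotients and requires no computation; everything else is transitivity of $\mathbf{PU}(1,n)$ on $1$--chains plus closedness under Hausdorff limits.
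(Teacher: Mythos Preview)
Your approach is sound and ends on the same ``lift a compact set from a homogeneous space'' punchline as the paper, but there is one point where you glide over exactly the step that uses the hypothesis $K^-\cap K^+=\emptyset$. You show that the set of projective lines meeting both $K^-$ and $K^+$ is compact in the Grassmannian $\PB\DB^2(\C^{n+1})$, and separately that $\mathcal C$ is closed in $G/H$; but to conclude that $\mathcal C$ is compact \emph{in $G/H$} you need that compact set of lines to lie inside the open set of $1$--chains (lines meeting $\BB^{2n}$). This is where disjointness enters: any line meeting both $K^-$ and $K^+$ passes through two \emph{distinct} null points, and in signature $(1,n)$ two distinct null lines always span a plane of signature $(1,1)$, so the line meets $\BB^{2n}$. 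Without this remark the compact-in-Grassmannian set could in principle escape to lines tangent to $\SB^{2n-1}$, and your argument would not close. It is a one-line fix, but it is the crux, and your write-up never invokes $K^-\cap K^+=\emptyset$ anywhere.

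The paper's proof takes a slightly different quotient. Instead of the stabiliser $H$ of $L_0$, it fixes two points $\xi^\pm\in L_0$, chooses an Iwasawa decomposition $KAN$ with $A$ fixing $\xi^\pm$, and uses the proper surjection
\[
\omega:G/A\longrightarrow\{(\eta^-,\eta^+)\in\SB^{2n-1}\times\SB^{2n-1}:\eta^-\neq\eta^+\},\qquad gA\longmapsto(g\xi^-,g\xi^+).
\]
Then $\omega^{-1}(K^-\times K^+)$ is compact in $G/A$ by properness, and one lifts to a compact $\mathcal G\subset G$; here $K^-\times K^+$ sits in the off-diagonal precisely because $K^-\cap K^+=\emptyset$, so the role of disjointness is transparent. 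Compared with your $G/H$ route, this avoids the Grassmannian and the signature check entirely at the cost of invoking the Iwasawa decomposition and the (standard) properness of $\omega$; your version is more elementary but needs the extra sentence above. Both finish with the same soft fact that compact subsets of $G/(\text{closed subgroup})$ admit compact lifts to $G$.
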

\begin{proof}
Fix $\xi^-$, $\xi^+$ two distinct elements of $L_0$ and let $KAN$ be an Iwasawa
decomposition of $G$ in which the Cartan subgroup $A$ fixes both $\xi^-$ and
$\xi^+$ (and, consequently, leaves $L_0$ globally invariant). 
The mapping $g \mapsto (g\xi^-, g\xi^+)$ defines, by passing to the quotient, a proper and onto mapping
\begin{equation}
    \omega : G/A \to \{(\eta^-, \eta^+) \in \SB^{2n-1} \times \SB^{2n-1}\ ;\ \eta^- \neq \eta^+ \}
\end{equation}
(Recall that a continuous mapping is proper if the inverse image of any
compact subset is a compact subset.)

Now, $K^- \times K^+$ being compact, so must be its inverse image $\omega^{-1}(K^-\times K^+)$.

To conclude, use the fact that any compact subset of $G/A$ is the image
(through the quotient mapping $G \to G/A$) of a compact subset of $G$.
\end{proof}

\begin{theorem}\label{th.sphere-chain}
     Let $A$ be a Borel subset of $\SB^{2n-1}$ of Hausdorff dimension $s$. Fix a $(n - 1)$--chain $L$. 
     For Lebesgue-almost every $u \in L$, the transverse dimension of A with respect to the foliation 
     of $\SB^{2n-1}$ by 1-chains passing through $u$ is at least
\begin{equation}
    \inf\{s, 2n - 3\}
\end{equation}
\end{theorem}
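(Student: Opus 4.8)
The plan is to mimic the proof of Theorem \ref{th.sphere-third}, but restricting the parameter $u$ to the fixed $(n-1)$--chain $L$ and using the sharper transversality estimate coming from Lemma \ref{lemma.intersection-transversality} (in the guise of Proposition \ref{prop.second-transversality}) rather than the crude bound used there. As before, I may assume $A$ is compact, and I fix an open set $O$ whose closure misses $A$; it suffices to prove the conclusion for Lebesgue-almost every $u \in L \cap O$, and then patch up by a covering argument as at the end of the proof of Theorem \ref{th.sphere-third}. Write $V \in \DB^n(\C^{n+1})$ for a decomposable $n$--vector with $L = \SB^{2n-1} \cap L_V$, so that $u$ ranges over $\PB(\Span(V))$ while $A$ lives in $\PB^n_\C \setminus L_V$.

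The heart of the matter is the local transversality estimate: for any distinct $w_1, w_2 \in A$ and any $r > 0$,
\begin{equation}
	\Leb_{L} \{ u \in L \cap O\ ;\ \tau(u, V \wedge (w_1 \vee w_2)) \leq r \} \lesssim r^{2n-3}
\end{equation}
uniformly in $w_1, w_2, r$. Here I first reduce, exactly as in Theorem \ref{th.sphere-third}, to the model configuration using the $KAN$/compactness argument of Lemma \ref{lemma.sphere-third}: since $A$ and $\overline{O_\varepsilon}$ are disjoint compacta, the relevant $1$--chains are of the form $g L_0$ with $g$ ranging over a fixed compact $\mathcal G \subset \mathbf{PU}(1,n)$ \emph{stabilizing $L$} (choose the Iwasawa/Cartan data adapted to $L$), so that the singular values of the $g$ involved stay in a compact subset of $]0,+\infty[$ and one may pass to a model vector $W_0 \in \DB^2(\Span(V))$. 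The key point is that $V \wedge (w_1 \vee w_2)$ is a genuine vector inside $\Span(V)$, which is a copy of $\C^n$; the chain $L$ with its spherical measure sits inside $\PB(\Span(V)) \cong \PB^{n-1}_\C$ the way a sphere $\SB^{2n-3}$ sits in a projective space of complex dimension $n-1$; and Lemma \ref{lemma.first-transversality} in that smaller space gives $\Leb\{u\ ;\ d(u, \text{point}) \leq r\} \lesssim r^{\delta_\C((n-1) - 0)} = r^{2(n-1)}$ for the full projective space, but when we restrict $u$ to the sphere $\SB^{2n-3} \subset \PB(\Span(V))$ the correct exponent drops to $2n-3$ — this is precisely the phenomenon already used (in the ambient space) to get the exponent $2n-2$ in Theorem \ref{th.sphere-third}. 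The inequality $\phi_U(w_1,w_2) \geq \tau(U, V \wedge W)$, valid for $U \subset \Span(V)$ by Lemma \ref{lemma.intersection-transversality} combined with \eqref{eq.phiU-formula}, then upgrades the estimate on $\tau$ to the estimate on $\phi_U$ that Kaufman's argument needs.

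With the transversality bound in hand, Kaufman's argument (run locally, cutting the measure into pieces supported away from $L$, exactly as in the proof of Corollary \ref{cor.first-transversality} and Theorem \ref{th.sphere-third}) shows that for Lebesgue-almost every $u \in L \cap O$ the $\sigma$--energy of $\Proj_u(\mu)$ is finite for every Frostman measure $\mu$ on $A$ and every $\sigma < \inf\{s, 2n-3\}$, hence the transverse dimension of $A$ along the foliation by $1$--chains through $u$ is at least $\inf\{s, 2n-3\}$. Letting $O$ exhaust $L$ minus a point, and then $\varepsilon \to 0$ along a sequence, removes the restriction and yields the claim for almost every $u \in L$. The main obstacle I anticipate is the bookkeeping in the geometric reduction step: one must verify carefully that the compact set $\mathcal G$ furnished by (the adaptation of) Lemma \ref{lemma.sphere-third} can be taken inside the stabilizer of $L$ in $\mathbf{PU}(1,n)$, so that conjugating by $\mathcal G$ keeps $u$ on $L$ and only distorts the relevant norms by bounded factors; everything else is a routine transcription of the already-established arguments, and the loss of one unit in the exponent ($2n-3$ instead of $2n-2$) is exactly the expected codimension drop from passing to the sub-chain $L$, which also explains why the conclusion is only an inequality rather than an equality (the matching upper bound would require $s \le 2n-3$ to be attained, which need not hold here).
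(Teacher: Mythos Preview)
Your approach is essentially the paper's: both hinge on Lemma \ref{lemma.intersection-transversality} to replace the $2$--vector $W = w_1 \vee w_2$ by the single vector $W \wedge V \in \Span(V)$, after which the transversality estimate becomes the trivial bound $\Leb_L\{u \in L\,;\, d(u, W \wedge V) \leq r\} \lesssim r^{2n-3}$, since $L$ carries the geometry of a round $(2n-3)$--sphere inside $\PB(\Span(V))$. Kaufman's argument then finishes the job exactly as you say.

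The one difference is that you interpose the $KAN$/compactness reduction of Lemma \ref{lemma.sphere-third} and flag as the ``main obstacle'' arranging that the compact set $\mathcal G$ stabilise $L$. In the paper this detour is simply absent: once $W$ has been replaced by the \emph{point} $W \wedge V$, the measure of a metric ball of radius $r$ on $\SB^{2n-3}$ is uniformly $\lesssim r^{2n-3}$ regardless of the centre, so there is no need to reduce to a model $W_0$ and the obstacle you anticipate never arises. (The paper does invoke transitivity of $\mathbf{PU}(1,n)$ once, but only at the very start to normalise $L$ to $\SB^{2n-1}\cap\PB(\C e_0\oplus\cdots\oplus\C e_{n-1})$, not to move $W$.) In short, your proof is correct but carries an unnecessary step whose removal also removes the difficulty you were worried about.
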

\begin{proof}
We argue as in the proof of the previous Theorem. Using transitivity of $G=\mathbf{PU}(1, n)$ in the same fashion as before, we can safely assume
that $L$ is the intersection of $\SB^{2n-1}$ with $\PB(\C e_0 \oplus \cdots \oplus \C e_{n-1})$. Let
$V = e_0 \vee \cdots \vee e_{n-1}$.

Now fix a compact subset $K \subset \SB^{2n-1}$ that does not meet $L = L_V$.

For any distinct $w_1, w_2 \in K$, letting $W = w_1 \vee w_2$, we know by Lemma \ref{lemma.intersection-transversality}
that 
\begin{equation}
    \tau (u, W ) \geq  \tau (u, W \vee V )
\end{equation}
for any $u \in \SB^{2n-1}$. It follows that
\begin{equation}
\Leb_L \{ u \in L\ ;\ \tau (u, W ) \leq r\}  \leq \Leb_L \{ u \in L\ ;\ d(u, W \vee V ) \leq r\}
\end{equation}
and $W \vee V$ is just a point of the $(n - 1)$--chain $L$.

The $(n - 1)$--chain $L$ is equal to the $(2n - 3)$--sphere $\SB^{2n-3} \subset \PB_\C^{n-1}$
where we identify $\PB_\C(\Span(V))$ with $\PB_\C^{n-1}$. The angular metric defined on $\PB^{n-1}_\C$
as in formula \eqref{eq.angular-metric} is equal to the restriction of $d$ to $\PB^{n-1}_\C$. Its restriction to 
$L$ is just the spherical metric of $\SB^{2n-3}$. The right-hand side in the previous equation
is thus $\leq r^{2n-3}$. The Theorem follows from this estimate, as above.
\end{proof}

\subsection{A concrete look at these foliations}\label{ss.concrete-look}
We now look at some concrete examples in order to get a better idea of
what is actually going on in the previous Theorems and how our results
are related to Marstrand's classical projection Theorem.

\subsubsection{The real case}
We fix $\K  = \R$. 

\paragraph{In affine coordinates.} Let $k = 0$ and $n \geq  2$. For any $u \in \PB_\R^n$, we
consider the foliation of $\PB_\R^n$ by projective lines passing through $u$
(with $u$ removed). If we pick affine coordinates $\R^n \subset \PB_\R^n$ and send some
projective hyperspace $\PB^{n-1}$ to infinity, we are looking, when $u$ belongs to
$\R^n$,  at the usual radial projection in $\R^n$.

On the other hand, if $u$ belongs to $\PB^{n-1}$, the resulting foliation of $\R^n$
has leaves affine lines parallel to the vector line associated to $u$.

We can now translate the conclusion of Corollary \ref{cor.second-transversality} in affine terms:
for Lebesgue-almost every $u \in \PB^{n-1}$, the dimension of $A$ transverse to
the foliation of $\R^n$ by affine lines parallel to the vector line associated to $u$, is equal to $\inf\{s, n - 1\}$.
 This is exactly the statement of the usual Marstrand projection Theorem along lines.

We see that this statement is equivalent to the following one: for
any affine hyperspace $L$ in $\R^n$, for Lebesgue-almost every $u \in L$, the
dimension of $A$ transverse to the foliation of $\R^n$  by affine lines
passing through $u$ is equal to $\inf\{s, n - 1\}$.

We leave it to the reader to inspect the case when $k \geq 1$ and come to
the conclusion that Corollary \ref{cor.second-transversality} is again essentially equivalent to
Marstrand's classical projection Theorem (which, in this case, is due to Kaufman and
Mattila).

\paragraph{Foliations of the sphere.} If $k = 0$, there is no interesting
 foliation induced because a (real) projective line meets the sphere in at most two points.

Assume $1 \leq k \leq n - 2$. Let us fix already a $(n - 2)$--sphere $L \subset \SB^{n-1}$
and let $A$ be a Borel subset of $\SB^{n-1}$ of Hausdorff dimension $s$. According
to Theorem \ref{th.sphere-second} for almost every $u_0, \ldots, u_k$ in $L$, the transverse dimension
 of $A$ with respect to the foliation of $\SB^{n-1}$ by $k$--spheres passing through
$u_0, \ldots, u_k$ is equal to $\inf\{s, n - 1 - k\}$.

Let us look at this result in the Euclidean space: send $u_0$ at infinity via 
the stereographic projection, in such a way that $L$ is the subspace $\R^{n-2}$ of $\R^{n-1}$.
 For any $u_1, \ldots, u_k \in L$, the foliation of $\R^{n-1}$ we obtain has leaves the affine $k$--spaces containing
$u_1, \ldots, u_k.$ We thus see that Theorem \ref{th.sphere-second} is actually weaker than Corollary \ref{cor.second-transversality} when $\K  = \R$.

\subsubsection{The complex case}
\paragraph{In affine coordinates.} Let us look already at the affine version of
Corollary \ref{cor.second-transversality}. First, fix $k = 0$. We are looking at foliations of $\C^n$ by
complex affine lines parallel to a given complex vector line (associated to
some point $u \in \PB_\C^{n-1}$). We can recast this in real terms: we are looking
at foliations of $\R^{2n}$ with real affine 2-planes parallel to a given real vector 2-plane. 
Now this real vector 2-plane cannot be just any 2-plane: it has
to be the real plane underlying some complex line.

It becomes apparent that we are effectively improving on Marstrand's
projection Theorem. This Theorem deals with the family of every real
vector plane of $\R^{2n}$, and we see that the conclusion still holds when we
restrict to this subspace of foliations, which is is Lebesgue-negligible.

It is perhaps enlightening to compare the dimension of the space of all
foliations of $\R^{2n}$ by parallel affine 2-planes, which is equal to $2(n-1)\times 2 = 4(n-1)$,
to the dimension of the subspace of those special foliations coming from complex lines, 
which is equal to $2(n - 1)$ (this is of course the real dimension of $\PB_\C^{n-1}$).

For an arbitrary $k \geq 0$ (and $k \leq n - 2$), we are looking, in the complex
case, at a family of foliations of $\R^{2n}$ by $2(k + 1)$--dimensional real spaces coming
from $(k + 1)$--dimensional complex spaces; the dimension of this
space of foliations is equal to $2(n - 1 - k)(k + 1)$, whereas the dimension of the space 
of all $2(k + 1)$--dimensional real vector subspaces of $\R^{2n}$ is
equal to $4(n - 1 - k)(k + 1)$.

\paragraph{Foliations of the sphere.} A $k$--chain of $\SB^{2n-1}$ is a special case of
a small $(2k - 1)$--sphere of this sphere. It is only natural to wonder what kind
of object that is. In fact, chains appear naturally in complex hyperbolic
geometry. The complex hyperbolic space of (complex) dimension $n$, $\mathbf{H}^n_\C$,
has $\SB^{2n-1}$ as its boundary at infinity. A totally geodesic submanifold $S$
of $\mathbf{H}^n_\C$ is one of two types:

\begin{itemize}
\item $S$ is isometric to a complex hyperbolic space $\mathbf{H}_\C^k$, $1 \leq k \leq n$;
\item Or $S$ is isometric to a real hyperbolic space $\mathbf{H}_\R^k$, $1 \leq k \leq 2(n - 1)$.
\end{itemize}
In the first case, the boundary of $S$ is a $k$--chain. In the second
case, it is, in Cartan's terminology, a real $k$--sphere. This is quite an unfortunate term; 
a complex chain is a sphere just as much as a real sphere is. 
We will come back to so-called real spheres in a later paper.

\bibliographystyle{plain}
\bibliography{bibli-chains}

\end{document}